\newcommand{\tmop}[1]{\ensuremath{\operatorname{#1}}}
\newtheorem{proposition}{Proposition}
\newtheorem{theorem}{Theorem}
\theoremstyle{definition}
\newtheorem{definition}{Definition}
\newtheorem{example}{Example}
\newtheorem{assumption}{Assumption}
\newcommand{\restore@Environment}[1]{%
  \AtBeginDocument{%
  	\csletcs{#1*}{#1}%
  	\csletcs{end#1*}{end#1}%
  }%
}
\forcsvlist\restore@Environment{alignat,equation,gather,multline,flalign,align} 
\newcommand\Algphase[1]{%
	\vspace*{-.7\baselineskip}\Statex\hspace*{\dimexpr-\algorithmicindent-2pt\relax}\rule{\textwidth}{0.4pt}%
	\Statex\hspace*{-\algorithmicindent}\textbf{#1}%
	\vspace*{-.7\baselineskip}\Statex\hspace*{\dimexpr-\algorithmicindent-2pt\relax}\rule{\textwidth}{0.4pt}%
}
\newcommand{\la}{\leftarrow}
\newcommand{\mb}{model-based regime}
\def\nhf{\nabla \hat{f}}
\newcommand{\etamin}{\eta_{\min}}
\newcommand{\etamax}{\eta_{\max}}
\definecolor{darkgreen}{rgb}{0,0.5,0}
\begin{document}

\title{Accelerating Model-Free Policy Optimization Using Model-Based Gradient: A Composite Optimization Perspective}

\author{Yansong Li\thanks{Email: \texttt{yli340@uic.edu}}}
\author[1]{Shuo Han}
\affil[1]{University of Illinois at Chicago}

\maketitle

\begin{abstract}
    We develop an algorithm that combines model-based and model-free methods for solving
    a nonlinear optimal control problem with a quadratic cost in which the system
    model is given by a linear state-space model with a small additive
    nonlinear perturbation. We decompose the cost into a sum of two functions, one having an explicit form obtained from the approximate linear model, the other being a black-box model representing the unknown modeling error. The decomposition allows us to formulate
    the problem as a composite optimization problem. To solve the optimization
    problem, our algorithm performs gradient descent using the gradient obtained from the approximate linear model
    until backtracking line search fails, upon which the model-based gradient is compared with the exact gradient obtained from a model-free algorithm. The difference between the model gradient and the exact gradient is then used for compensating future gradient-based updates.
    Our algorithm is shown to decrease the number of function evaluations compared with traditional
    model-free methods both in theory and in practice.
\end{abstract}

\section{Introduction}

Learning-based algorithms have flourished in the field of control~\cite{recht_tour_2019} in recent years with applications in robotic manipulation and locomotion~\cite{kohl_policy_2004,levine_end--end_2016}, energy systems~\cite{chen_reinforcement_2021}, and transportation~\cite{wu_flow_2017}.  Learning-based control algorithms can be classified into two categories: model-free algorithms and model-based algorithms. Model-free algorithms do not utilize an explicit form of the
dynamics and can handle dynamics that are hard to model. In the meantime, model-free algorithms
require a large amount of data to learn an optimal policy. This may
become impractical for physical systems, in which data collection is often
expensive since every collection involves physical
interactions with the environment.

In contrast, model-based algorithms maintain an explicit form of the
dynamics such as a state-space model. With the aid of a model, model-based algorithms
usually require less amount of data~\cite{tu_gap_2019-1} to learn an optimal
policy. However, model-based algorithms tend to have difficulties in modeling complex dynamics and may suffer from model bias~\cite{deisenroth_gaussian_2015,atkeson_comparison_1997,abbeel_using_2006} when the model class is not sufficiently rich.

In this paper, we develop a method that combines model-free and
model-based methods to learn an optimal policy for controlling a nonlinear system under a
quadratic cost. We formulate the optimal control
problem as a composite optimization problem, in which the cost is expressed as a composite function
defined by the sum of a model-based part and a model-free part: the model-based part has an analytical form, whereas the model-free part is viewed as a black box.  We then develop a hybrid
gradient-based algorithm that searches for the optimal solution using the \emph{model gradient}, i.e., the gradient
of the model-based part, plus a corrective compensation term, where the compensation is intermittently updated by the
gradient of the model-free part. The algorithm is shown to require fewer function
evaluations compared with purely model-free algorithms both in theory and in practice.

\paragraph{Related work}

There have been several attempts in combining model-based and model-free
algorithms. Qu et al.~\cite{qu_combining_2020} considered a modification of
the linear quadratic regulator (LQR) problem, where a small additive nonlinear
perturbation is introduced to the original linear dynamics. They first applied
model-based control using the approximate model given by the linear part of
the dynamics to obtain a near-optimal policy. Then, they showed that, using
the near-optimal policy as the initial policy, the model-free policy gradient
method is able to produce an optimal policy for the modified LQR problem.
Chebotar et al.~\cite{chebotar_combining_2017-1} developed an algorithm to learn
time-varying linear-Gaussian controllers for reinforcement learning. They
proved that the update of a model-free algorithm, policy improvement with path
integrals (PI$^2$), can be decomposed into two components, one depending on an
approximate cost and another depending on the cost residual. 
The two component are updated separately, where the first component is updated with
a model-based algorithm named LQR-FLM, and the second with PI$^2$. Shashua et al.~\cite{shashua_sim_2021-1} used a descent algorithm with a gradient
mapping that is updated in each iteration based on the model and interactions with
the environment. Our algorithm is closest to the one introduced by Abbeel et al.~\cite{abbeel_using_2006}. They used inaccurate
dynamics to train a near-optimal policy, which was used to collect trajectories from the real Markov decision process (MDP). For each state-action pair from the collected trajectories, they compared the next state in the trajectories with the one predicted by the inaccurate MDP dynamics and used the difference to update the inaccurate MDP dynamics by adding a bias term. In comparison, our algorithm focuses on the control cost rather than the system dynamics. Specifically, our algorithm decomposes the control cost into two parts, one of which, called the model-based part, is related to inexact system dynamics. Instead of updating the inaccurate system dynamics, our algorithm updates the gradient of the model-based part of the cost using information from the true dynamics when performing gradient-based policy optimization.

Our work is also related to policy gradient for LQR, discussed in~\cite{fazel_global_2018,bu_lqr_2019}, in that we also use the same zeroth-order method to compute the gradient of the cost. See~\cite{conn_introduction_2009} for a comprehensive coverage of zeroth-order methods in optimization.

Finally, our work is closely related to composite optimization (see footnote\footnote{Throughout this paper, the term composite optimization refers to \emph{additive} composite optimization, in which the objective function can be expressed as a sum of two functions~\cite{nesterov_gradient_2013}. The same term sometimes also refers to the case in which the objective function is a composition of two functions (see, e.g.,~\cite{lewis_proximal_2016}).}). A classical method for solving composite optimization problems is the proximal gradient method~\cite{rockafellar_monotone_1976,beck_fast_2009,parikh_proximal_2014,nesterov_gradient_2013}. The method assumes that the objective function can be expressed as the sum of a differentiable convex function and another possibly non-differentiable convex function. Moreover, the non-differentiable convex function is assumed to be associated with a proximal operator that can be efficiently evaluated. In contrast, our algorithm deals with composite optimization problems in which both functions in the sum are differentiable but does not assume the existence of an efficiently computable proximal operator.

\section{Background: Nonlinear optimal control with quadratic
cost}\label{sec2:Background: Nonlinear optimal control with quadratic
cost}

We consider a modified discrete-time LQR problem, which appeared in~\cite{qu_combining_2020}. The dynamical system is described by a nonlinear
state-space model with state $x_t \in \mathbb{R}^n$ and control input $u_t \in
\mathbb{R}^p$,
\begin{equation}
  x_{t + 1} = A x_t + B u_t + h (x_t,u_t), \label{dyn}
\end{equation}
where $A \in \mathbb{R}^{n \times n}$, $B \in \mathbb{R}^{n \times p}$ and $h
\colon \mathbb{R}^n \times \mathbb{R}^p \rightarrow \mathbb{R}^n$ with $h(0,0) = 0$. The nonlinear
function $h$ is not assumed to admit an explicit form but is considered ``small,'' where the precise meaning of ``small'' will be discussed in Section~\ref{subsec: Progress in the model-based regime}. The goal is to find a linear state feedback controller $u_t = - K x_t$ (see footnote\footnote{The goal is not to find an optimal state feedback controller, which may be nonlinear for nonlinear systems even when the cost is quadratic.}) that minimizes the cost
$C \colon \mathbb{R}^{p \times n} \rightarrow \mathbb{R}$ defined by
\begin{equation}
  C (K) = 
  \mathbb{E}_{x_0 \sim \mathcal{D}} \left( \overset{\infty}{\underset{t =
  0}{\sum}} x_t^{\top} Q x_t + u_t^{\top} R u_t \right), \label{LQRcost}
\end{equation}
where $Q \in \mathbb{R}^{n \times n}$ and $R \in \mathbb{R}^{p \times p}$ are
positive definite and $x_0$ is the initial state and is drawn from a fixed
distribution $\mathcal{D}$. Multiple problems can be formulated as
\eqref{dyn}. For example, the dynamics of a robotic manipulator can be
described by a linear state-space model plus an error term $h$. Note that if
$h$ is zero everywhere, i.e., $h \equiv 0$, the problem is the same as the
vanilla LQR problem, which has a closed-form solution.

\begin{definition}
  \label{hatc}We define $\hat{C}$ as the quadratic cost~\eqref{LQRcost} when
  $h \equiv 0$ in~\eqref{dyn}. We denote the optimal point that minimizes
  $\hat{C}$ by ${\hat{K}^{\star} }$ and the optimal point that minimizes $C$ by $K^{\star}$.
\end{definition}

Throughout this paper, we propose to minimize $C$ using a policy gradient 
method that uses gradient computed from a 
\emph{zeroth-order method}~\cite{fazel_global_2018}, which only requires access to the values of the cost rather than an analytical expression of the gradient mapping. The method is model-free since it does not rely on an explicit form of the system dynamics. Because each evaluation of the cost function requires collecting trajectories generated from the dynamical system, model-free policy gradient methods need to sample numerous trajectories in order to find a
near-optimal policy~\cite{tu_gap_2019-1}. This issue of high sample complexity has been recognized as a major bottleneck in applying model-free policy gradient methods to physical systems, where the collection of trajectories are often expensive. 

\section{Proposed framework}\label{sec2}

To reduce the sample complexity of model-free policy gradient methods, we
propose a policy optimization framework that leverages an inexact model of the
system, given by the linear part of the dynamics. Our key idea is to
reformulate the original policy optimization problem as a composite
optimization problem that explicitly shows how model information enters policy
optimization. The revelation of the role of the model naturally leads to an
optimization algorithm that solves the composite optimization problem with
fewer function evaluations than traditional model-free policy gradient
methods.

\subsection{A composite optimization perspective of policy optimization}

We decompose the cost function $C$ into two components as
\begin{equation}
  C (K) = \hat{C} (K) + r (K), \label{decop}
\end{equation}
where $\hat{C}$ is defined in Definition~\ref{hatc}, and $r$ is the
residual term defined by $r \triangleq C - \hat{C}$. The problem of minimizing
the cost in~\eqref{decop} is an unconstrained optimization problem of the
following form:
\begin{equation}
  \underset{x \in \mathbb{R}^n}{\tmop{minimize}} \quad f (x), \label{opt}
\end{equation}
where $f = \hat{f} + r$. We call $\nabla f(x)$ the \emph{exact gradient}
and $\nabla \hat{f}(x)$ the \emph{model gradient} at $x$. Denote the optimal value
of~\eqref{opt} by $f^{\star}$ and an optimal solution of~\eqref{opt} by
$x^{\star}$. Also, denote $\hat{x}^{\star}$ as a point where $\nabla \hat{f}
(\hat{x}^{\star}) = 0$.

We make the following assumptions throughout the paper.

\begin{assumption}\label{Ass:PL_condition}
  The function $f$ satisfies the PL-condition, i.e., there exists
  some $\mu > 0$ such that
  \begin{equation}
    \frac{1}{2} \| \nabla f (x) \|^2 \geq \mu (f (x) - f^{\star})
    \label{pl}
  \end{equation}
  for all $x$.
\end{assumption}

\begin{assumption}\label{Ass:L_r}
  The residual mapping $r$ is $L_r$-smooth, i.e., $\nabla r$ is $L_r$-Lipschitz continuous.
\end{assumption}

\begin{assumption}\label{Ass:func_evas}
  The gradient value $\nabla f (x)$ can be computed
  with $ n $ function evaluations, where $ n $ is the dimension of 
  $ x $. The model gradient $\nabla \hat{f}$ has a closed-form
  expression.
\end{assumption}

For the modified LQR problem defined by~\eqref{dyn} and~\eqref{LQRcost},
Assumption~\ref{Ass:PL_condition} is satisfied when $K$ is close to $K^{\star}$, since
$ C $  is locally strongly convex near $K^{\star}$~\cite{qu_combining_2020}, and strong convexity implies the PL-condition~\cite{frasconi_linear_2016}.
Assumption~\ref{Ass:L_r} is also satisfied when $K$ is close to
$K^{\star}$ due to the local smoothness of $ C $~\cite{qu_combining_2020}. Assumption~\ref{Ass:func_evas} is satisfied since $\nabla C (K)$ can
be computed by zeroth-order methods, and $\nabla \hat{C} (K)$ has a closed-form
solution~\cite{fazel_global_2018}. 


\subsection{A model-exploiting composite optimization algorithm}

Throughout this paper, we focus on descent methods for solving the
composite optimization problem~\eqref{opt}. At each iteration, the value of
the optimization variable is updated from $x$ to $x_+$ by
\[ x_+ = x - t \Delta (x), \label{uprule} \]
where $t$ is the step size, and $\Delta (x)$ is a descent direction satisfying $f (x
- t \Delta (x)) < f (x)$ when $t$ is sufficiently small.

To solve the problem in~\eqref{opt}, Qu et al.~\cite{qu_combining_2020} used the gradient descent algorithm with the exact gradient starting from $x = \hat{x}^{\star}$. The
algorithm in~{\cite{qu_combining_2020}} can be viewed as applying the gradient descent
algorithm with the model gradient mapping $\nabla \hat{f}$ until $\nabla \hat{f}
(x) = 0$, i.e., $x = \hat{x}^{\star}$, after which the algorithm switches to
using the exact gradient $\nabla f$. In our algorithm, shown in Algorithm~\ref{algo1},
we also use $\nabla \hat{f}$, but we update $\nabla \hat{f}$ via a constant
compensation $\delta$, which is defined as follows. 

\begin{definition}
  Consider a composite function $f = \hat{f} + r$. We call 
  \begin{equation}
    \tilde{g} \triangleq \nabla \hat{f} + \delta \label{up}
  \end{equation}
the \emph{compensated model gradient mapping}  compensated at point $\tilde{x}$, where the compensation $\delta$ at $\tilde{x}$ is a constant defined by
  \begin{equation}
    \delta \triangleq \nabla f (\tilde{x}) - \nabla \hat{f} (\tilde{x}) = \nabla r
    (\tilde{x}) . \label{comp}
  \end{equation}
\end{definition}

Note that the model gradient mapping $ \nhf $ can be viewed as a compensated model gradient mapping $ \tilde{g} $ with zero compensation.
Due to the smoothness of $r$ (Assumption~\ref{Ass:L_r}), when $\tilde{x}$ is close to $x$, $\nabla
r (\tilde{x})$ will be close to $\nabla r (x)$. Thus, the compensated model
gradient $\tilde{g} (x) = \nabla \hat{f}(x) + \nabla r(\tilde{x})$ will be close to $\nabla f (x) = \nabla \hat{f}(x) + \nabla r(x)$. By Assumption~\ref{Ass:func_evas}, the constant compensation $ \delta \triangleq \nabla f (\tilde{x}) - \nabla \hat{f} (\tilde{x}) =  \nabla r(\tilde{x})$ can be computed with $ n $ function evaluations.

\begin{algorithm}
	\caption{Gradient compensation algorithm}
	\label{algo1}
	\begin{algorithmic}[1]
		\State Obtain $ \eta $ by applying backtracking line search
    with the compensated model gradient mapping $ \tilde{g} $ at $ x $, i.e., $ \tilde{g}(x) $.
		\If{backtracking line search succeeds at $x$}
		\Comment{Model-based Regime} 

		\State $ x \gets x - \eta \tilde{g}(x) $ 
		\Else
		\Comment{Model-free Regime} 
		\State Apply backtracking line search with the exact gradient $ \nabla f(x) $ to obtain $ \eta $.

		\State $ x \gets x - \eta \nabla f(x) $
		\State $ \tilde{g} \gets \nhf + \nabla f(x)- \nhf(x)  $ 
		\EndIf

	\end{algorithmic}

\end{algorithm}	

The difference in the use
of model information between our algorithm and the algorithm in~\cite{qu_combining_2020} is highlighted below (where ``LSF'' stands for ``backtracking line search fails''):
\begin{itemize}
  \item Qu et al.~\cite{qu_combining_2020}: $\nabla \hat{f} \xrightarrow{\nabla
  \hat{f} = 0} \nabla f$
  
  \item Our algorithm: $\nabla \hat{f} \xrightarrow{\nabla \hat{f} =
  0} \nabla f \rightarrow \tilde{g}^{(1)}
  \xrightarrow{\tmop{LSF}} \nabla f \rightarrow
  \tilde{g}^{(2)} \xrightarrow{\tmop{LSF}} \cdots$
\end{itemize}
The mapping $\tilde{g}^{(i)}$ is the $i$-th compensated model gradient mapping. Our algorithm performs gradient descent using $\tilde{g}^{(i)}$ until $\tilde{g}^{(i)} (x)$ is no longer a descent direction at the current iterate $x$, which can be discovered from the failure of backtracking line search~\cite{boyd_convex_2004}, i.e.,
\begin{equation}
f (x - \eta \tilde{g} (x)) \geq f (x) - \alpha \eta \| \tilde{g} (x)
\|^2 \label{LSF}
\end{equation}
for any step size $\eta > 0$ and $\alpha \in (0, 1 / 2)$. When backtracking line search fails, our algorithm performs one step of gradient descent using the exact gradient $\nabla f(x)$, replaces $\tilde{g}^{(i)}$ with
$\tilde{g}^{(i + 1)} \triangleq \nabla \hat{f} + \nabla f (x) - \nabla \hat{f} (x)$, and continues with the new compensated model gradient mapping $\tilde{g}^{(i + 1)}$. 
Note that the standard backtracking line search~\cite{boyd_convex_2004} will
not terminate when it fails to find a positive step size $\eta$. We will revisit this issue in Section~\ref{Sec: Modified Backtracking Line Search}.

In the following, we say that the algorithm operates in the \emph{model-free regime} when the exact gradient is used and in the
\emph{model-based regime} when the compensated model gradient mapping $ \tilde{g} $ is used. Let the superscript $i$ denote the index for the model-free iterations and the subscript $j$
the index for the model-based iterations between two model-free iterations, i.e.,
\begin{equation}
  {x_{j + 1}^{(i)}} =  x_j^{(i)} - \eta \tilde{g}^{(i)} (x_j^{(i)})\qquad \text{(model-based update)}
\end{equation}
and
\begin{equation}
  x^{(i + 1)}_1 = x^{(i)}_{N_i} - \eta \nabla f (x^{(i)}_{N_i})\qquad \text{(model-free update)},
\end{equation}
where $N_i$ is the number of model-based iterations between the $(i-1)$-th and the $i$-th model-free iterations. We will drop the iteration index $i$ when the results do not depend on a specific model-free iteration.

\section{Main result}
When the exact gradient $\nabla f$ is used for computing the update $x_+$, our algorithm is identical to the vanilla gradient descent. Therefore, we only need to consider the case when $x_+$ is computed using the
compensated model gradient mapping $\tilde{g}$ for analyzing the performance of our algorithm. In other words, we only need to consider the progress that our algorithm made in the model-based regime.

\subsection{Modified backtracking line search} \label{Sec: Modified Backtracking Line Search}
In the model-based regime, the compensated model gradient mapping $ \tilde{g} $ is used in place of the exact gradient mapping $ \nabla f $. As a result, the standard backtracking line search~\cite{boyd_convex_2004} may no longer terminate, as illustrated in the following example.

\begin{example}
  \label{stuckmb}Consider $f (x) = (x - 2)^2$ with a compensated model
  gradient $\tilde{g} (x) = 2 x $. The compensated model gradient mapping $ \tilde{g} $ satisfies $\tilde{g}
  (x) > 0$ when $x \in (0, 2)$. Also, $f$ is monotonically decreasing when $x
  < 2$, i.e., $f (x + \Delta x) \geq f (x)$ for any $\Delta x < 0$ and $x
  < 2$. So $f (x - \eta \tilde{g} (x)) \geq f (x) > f (x) - \alpha \eta
  \| \tilde{g} (x) \|^2$ for any $\alpha > 0$ and $\eta > 0$. In other words, the Armijo
  condition~\cite[page 33]{nocedal_numerical_2006}, which is used for terminating the standard
  backtracking line search, cannot be satisfied for any positive $\eta$, implying that the
  standard backtracking line search~\cite{boyd_convex_2004} never terminates.
\end{example}

To ensure that the backtracking line search subroutine eventually terminates, we set
a lower bound $\eta_{\min}$ for the step size $\eta$. The algorithm quits the
backtracking line search subroutine and reports that line search fails
whenever $\eta \leq \eta_{\min}$. The modified backtracking line search algorithm is
presented as Algorithm~\ref{BtLS}.

We denote the initial step size in the backtracking line search as $ \etamax $.
Upon successful termination, the modified backtracking line search
always returns a bounded step size $ \eta \in (\etamin,\etamax)$. 
The modified backtracking line search will only be used in the model-based regime, whereas the standard backtracking line search will be used in the model-free regime. In the following sections, we will simply use the term backtracking line search when the type of backtracking line search is clear from the context.

\begin{algorithm}
	\caption{\textsc{BtLineSearch}}
	\label{BtLS}
	\begin{algorithmic}[1]
		\Require $ x,f,\Delta, \alpha, \beta, \eta_{\min}, \etamax $
		\Ensure $ \eta $
		\State $\eta \gets \eta_{\max}$
		\While{$ f(x - \eta\Delta(x)) > f(x) - \alpha \eta \| \Delta(x)\|^2 $} \label{alg:cond:armijo}
		\If{$\eta > \eta_{\min}$}
		\State $\eta \gets \beta \eta$
		\Else
		\State $ \eta \gets 0 $ \Comment{Line search fails.}
		\EndIf
		\EndWhile
	\end{algorithmic}
\end{algorithm}

Another issue we need to deal with, which will be shown in the next example, is that the algorithm may get stuck in the
model-based regime and never converge to the optimal solution.

\begin{example}
  \label{stuckinmb}Consider the case in Example~\ref{stuckmb}. The update rule
  for this case is given by $x_{j + 1} = x_j - \eta_j \tilde{g} (x_j) = (1 - 2 \eta_j) x_j$
  for $j = 1, \ldots, N$. When the update starts at $x_1 < 0$ with $\etamax < 1 /
  2$, we have $x_j \leq (1 - 2 \etamax)^j x_1 < 0$ for all $j$. Fix an $\alpha \in
  (0, 1 / 2)$, and suppose $\eta_{\min} < 1 - \alpha - 2 / x_1$. It can be verified
  that backtracking line search always succeeds since $f (x_{j + 1}) < f (x_j) - \alpha \eta_j \|
  \tilde{g} (x_j) \|$ for $\eta_j \in (\eta_{\min}, 1 / 2)$. Despite the success
  of line search, the algorithm never converges to the optimal solution
  $x^{\star} = 2$.
\end{example}

Example~\ref{stuckinmb} shows that an inappropriate choice of $ \etamax $ may cause the algorithm to converge to a
suboptimal solution. In general, however, it is difficult to verify whether the choice of $\eta_{\max}$ is appropriate.
We will discuss in Section~\ref{subsec:sufficient decrease condition} how to
deal with this non-convergent behavior.

\subsection{The sufficient decrease condition}\label{subsec:sufficient decrease condition}

Example~\ref{stuckinmb} shows that the algorithm may not ``make sufficient
progress'' even when line search succeeds. In the following, we formally
define what ``sufficient progress'' means and give a verifiable
condition to test whether the algorithm makes sufficient progress when line
search succeeds.

\begin{definition}
  A point ${x_+} $ is said to satisfy the \emph{sufficient decrease
  condition} for $f$ at $x$ if
  \begin{equation}
    f (x_+) < f (x) - t \| \nabla f (x) \|^2 \label{suffid}
  \end{equation}
  for some $t > 0$. 
\end{definition}

In vanilla gradient descent, when backtracking line search succeeds, the Armijo condition
\begin{equation}
  f (x - \eta \nabla f (x)) < f (x) - \alpha \eta \| \nabla f (x) \|^2
  \label{vgd}
\end{equation}
is satisfied for some step size $\eta > 0$ and $\alpha \in (0, 1/2)$.
It can be seen from~\eqref{vgd} that ${x_+} 
= x - \eta \nabla f (x)$ satisfies the sufficient decrease condition for $f$ at $x$. Because backtracking line search always succeeds
when $\nabla f (x) \neq 0$, the condition in~\eqref{vgd} always holds, implying that the function
value will decrease sufficiently in every iteration. The sufficient decrease
condition is central to proving the convergence of vanilla
gradient descent~\cite{boyd_convex_2004} as well as our algorithm, which
will be shown in Section~\ref{subsec: Progress in the model-based regime}. The following example shows that backtracking line search may
not guarantee sufficient decrease when the model gradient is used.

\begin{example}
  Consider again the case in Example~\ref{stuckmb}. Set $x = - 2$, $\alpha = 1
  / 2$, and $\eta = 1 / 2$. Backtracking line search succeeds since $f (x - \eta
  \tilde{g} (x)) = 4 < 12 = f (x) - \alpha \eta \| \tilde{g} (x) \|^2$. Nevertheless, the sufficient decrease condition is not satisfied since $f (x - \eta \tilde{g}
  (x)) \geq 0 = f (x) - \alpha \eta \| \nabla f (x) \|^2$.
\end{example}

Although the success of backtracking line search does not in general imply the sufficient decrease condition, the following proposition shows that the implication holds when $\|\nabla f (x) - \tilde{g}(x) \|$ is sufficiently small.

\begin{proposition}
  Suppose backtracking line search succeeds at $x$ when using the compensated
  model gradient mapping $\tilde{g}$, and $\tilde{g}$ satisfies \label{suffi}
  \begin{equation}
    \| \nabla f (x) - \tilde{g} (x) \| \leq \frac{1 - \gamma}{\gamma} \|
    \tilde{g} (x) \| \label{wsc}
  \end{equation}
  for some $\gamma \in (0, 1)$. Then the updated point $x_+ =
  x - \eta \tilde{g} (x)$, where $\eta$ is given by backtracking line search,
  satisfies the sufficient decrease condition for $f$ at $x$.
\end{proposition}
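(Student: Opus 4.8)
The plan is to start from the Armijo inequality guaranteed by the successful line search and replace the term $\|\tilde{g}(x)\|^2$ on its right-hand side by a positive multiple of $\|\nabla f(x)\|^2$, so that the resulting bound matches the sufficient decrease condition~\eqref{suffid}. Concretely, since backtracking line search succeeds at $x$ with the mapping $\tilde{g}$, the returned step size $\eta$ satisfies
\[ f(x - \eta \tilde{g}(x)) < f(x) - \alpha \eta \|\tilde{g}(x)\|^2. \]
Hence it suffices to show that $\|\tilde{g}(x)\|^2 \geq \gamma^2 \|\nabla f(x)\|^2$, after which the right-hand side above is itself bounded by $f(x) - \alpha\eta\gamma^2\|\nabla f(x)\|^2$.

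The key step is to extract this lower bound from the hypothesis~\eqref{wsc} via the triangle inequality. Writing $\nabla f(x) = \tilde{g}(x) + (\nabla f(x) - \tilde{g}(x))$ and applying~\eqref{wsc}, I would obtain
\[ \|\nabla f(x)\| \leq \|\tilde{g}(x)\| + \|\nabla f(x) - \tilde{g}(x)\| \leq \left(1 + \tfrac{1-\gamma}{\gamma}\right)\|\tilde{g}(x)\| = \tfrac{1}{\gamma}\|\tilde{g}(x)\|, \]
which rearranges to $\|\tilde{g}(x)\| \geq \gamma\|\nabla f(x)\|$ and hence $\|\tilde{g}(x)\|^2 \geq \gamma^2\|\nabla f(x)\|^2$.

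Combining the two displays gives $f(x_+) = f(x-\eta\tilde{g}(x)) < f(x) - \alpha\eta\gamma^2\|\nabla f(x)\|^2$, which is exactly~\eqref{suffid} with the choice $t = \alpha\eta\gamma^2$. To conclude, I would verify that $t > 0$: indeed $\alpha > 0$ and $\gamma \in (0,1)$ are fixed constants, and a successful (modified) line search returns a strictly positive step size $\eta \in (\eta_{\min}, \eta_{\max})$.

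I do not anticipate a substantive obstacle; the argument is a short chain of inequalities. The only point requiring care is the orientation of the triangle-inequality estimate — it must bound $\|\nabla f(x)\|$ \emph{above} by a multiple of $\|\tilde{g}(x)\|$ (rather than the reverse), since it is precisely a lower bound on $\|\tilde{g}(x)\|$ that feeds into the Armijo right-hand side. One should also confirm that the resulting $t$ depends only on $\alpha$, $\gamma$, and the positive step $\eta$, so that the sufficient decrease condition holds with a genuinely positive constant.
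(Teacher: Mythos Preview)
Your proposal is correct and follows essentially the same approach as the paper: both arguments use the triangle inequality together with~\eqref{wsc} to obtain $\|\tilde{g}(x)\|\geq\gamma\|\nabla f(x)\|$, then substitute this into the Armijo inequality to produce the sufficient decrease bound with $t=\alpha\gamma^{2}\eta$. The only cosmetic difference is that you simplify $1+\tfrac{1-\gamma}{\gamma}=\tfrac{1}{\gamma}$ directly, whereas the paper rearranges the reverse triangle inequality; the content is identical.
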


\begin{proof}
  From the triangle inequality and~\eqref{wsc}, one can see that
  \begin{equation}
    \| \nabla f (x) \| - \| \tilde{g} (x) \| \leq \| \nabla f (x) -
    \tilde{g} (x) \| \leq \frac{1 - \gamma}{\gamma} \| \tilde{g} (x) \| .
  \end{equation}
  Since $\gamma > 0$, it holds that
  \begin{equation}
    \gamma (\| \nabla f (x) \| - \| \tilde{g} (x) \|) \leq (1 - \gamma)
    \| \tilde{g} (x) \| .
  \end{equation}
  Thus,
  \begin{equation}\label{equ:compare_norm_of_grad}
    \| \tilde{g} (x) \| \geq \gamma \| \nabla f (x) \| .
  \end{equation}
  Recall that, when the backtracking line search algorithm returns a nonzero step $\eta
  \neq 0$, it holds that
  \begin{equation}\label{eqn:thm1amijo}
    f (x_+) \leq f (x) - \alpha \eta \| \tilde{g} (x) \|^2 .
  \end{equation}
  Combining~\eqref{equ:compare_norm_of_grad} and~\eqref{eqn:thm1amijo}, we have
  \begin{equation}
    f (x_+) \leq f (x) - \alpha \gamma^2 \eta \| \nabla f (x) \|^2,
    \label{pfprop}
  \end{equation}
  from which the proposition follows.
\end{proof}

Define $e (x) \triangleq \| \nabla f (x) - \tilde{g} (x) \|$, the error of
the compensated model gradient mapping $\tilde{g}$ at $x$. According to Proposition~\ref{suffi},
backtracking line search will ensure a sufficient decrease for updates using the
compensated model gradient mapping $ \tilde{g} $ when $e (x)$ is small. Conceivably, one can determine whether the sufficient decrease condition is still met through 
monitoring the value of $e (x)$. However, it is desirable to avoid evaluating $e (x)$
directly since computing $\nabla f$ is expensive.

Instead of evaluating $e(x)$ directly, we propose to construct an upper bounding sequence $\{\bar{e}_j\}$ satisfying
$\bar{e}_j \geq e (x_j)$ for all $j = 1, \ldots, N$ and monitor $\bar{e}_j$ in place of $e(x_j)$. Specifically, we construct
$\{ \bar{e}_j \}$ by choosing
\begin{equation}
  \bar{e}_1 = 0, \qquad \bar{e }_{j + 1} = L_r \| x_{j + 1} - x_j \| +
  \bar{e}_j . \label{cons}
\end{equation}
The following theorem shows that the sequence $\{\bar{e}_j\}$ constructed in~\eqref{cons} can be used to guarantee the
sufficient decrease condition when backtracking line search succeeds.

\begin{theorem}\label{problre}
  Consider a sequence $\{ x_j \}_{j = 1}^N$ defined by $x_{j + 1} =
  x_j - \eta_j \tilde{g} (x_j)$, where $\eta_j$ is given by the
  backtracking line search. Suppose the backtracking line search using $\tilde{g} (x_j)$
  succeeds at $x_j$ for all $j$. Let $\{\bar{e}_j\}$ be defined by~\eqref{cons}.
  If for a fixed
  $\gamma \in (0, 1)$, we have
  \begin{equation}
    \bar{e}_j \leq \frac{1 - \gamma}{\gamma} \| \tilde{g} (x_j) \|
    \label{lre}
  \end{equation}
  for any $j$, then $x_{j + 1}$ satisfies the sufficient decrease condition for
  $f$ at $x_j$ for all $j = 1,\dots,N$. 
\end{theorem}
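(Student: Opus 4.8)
The plan is to reduce the theorem to Proposition~\ref{suffi} by showing that the constructed sequence $\{\bar e_j\}$ genuinely dominates the true error $e(x_j)=\|\nabla f(x_j)-\tilde g(x_j)\|$. Once the bound $e(x_j)\le\bar e_j$ is established, the hypothesis~\eqref{lre} immediately yields $e(x_j)\le\frac{1-\gamma}{\gamma}\|\tilde g(x_j)\|$, which is exactly condition~\eqref{wsc}. Since backtracking line search is assumed to succeed at each $x_j$, Proposition~\ref{suffi} then certifies that $x_{j+1}=x_j-\eta_j\tilde g(x_j)$ satisfies the sufficient decrease condition for $f$ at $x_j$, for every $j$. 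So the whole argument rests on the single inequality $e(x_j)\le\bar e_j$.

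The first key step is to rewrite $e(x)$ in a form that exposes its dependence only on $\nabla r$. Writing $\tilde g=\nabla\hat f+\delta$ with $\delta=\nabla r(\tilde x)$ and $\nabla f=\nabla\hat f+\nabla r$, the model-gradient term $\nabla\hat f(x)$ cancels, leaving $e(x)=\|\nabla r(x)-\nabla r(\tilde x)\|$. Because the compensation is set at the first iterate entering the model-based regime, the compensation point is $\tilde x=x_1$, so $e(x_1)=\|\nabla r(x_1)-\nabla r(x_1)\|=0=\bar e_1$, which anchors the induction and explains the choice $\bar e_1=0$ in~\eqref{cons}.

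The heart of the argument is then an induction on $j$ establishing $e(x_j)\le\bar e_j$, with base case the identity just noted. For the inductive step I would insert $\nabla r(x_j)$, apply the triangle inequality, and use the $L_r$-Lipschitz continuity of $\nabla r$ (Assumption~\ref{Ass:L_r}) on the first piece:
\begin{align*}
  e(x_{j+1}) &= \|\nabla r(x_{j+1})-\nabla r(\tilde x)\| \\
  &\le \|\nabla r(x_{j+1})-\nabla r(x_j)\| + \|\nabla r(x_j)-\nabla r(\tilde x)\| \\
  &\le L_r\|x_{j+1}-x_j\| + e(x_j).
\end{align*}
The induction hypothesis $e(x_j)\le\bar e_j$ together with the defining recurrence $\bar e_{j+1}=L_r\|x_{j+1}-x_j\|+\bar e_j$ in~\eqref{cons} closes the step, giving $e(x_{j+1})\le\bar e_{j+1}$. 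This is precisely where the recursive construction of $\{\bar e_j\}$ was engineered to match the telescoping Lipschitz bound on the residual gradient.

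I do not anticipate a serious obstacle here, since the construction~\eqref{cons} is tailored so that the telescoping bound holds term by term; the only genuinely substantive observation is the cancellation identity $e(x)=\|\nabla r(x)-\nabla r(\tilde x)\|$, which reduces everything to the Lipschitz continuity of $\nabla r$. The one point requiring care is correctly identifying the compensation point as $x_1$ so that the base case $\bar e_1=0$ is a valid bound on $e(x_1)$; were the compensation anchored at a point other than $x_1$, the initialization would need an extra term accounting for the gap between that point and $x_1$. With the bound $e(x_j)\le\bar e_j$ in hand, chaining $e(x_j)\le\bar e_j\le\frac{1-\gamma}{\gamma}\|\tilde g(x_j)\|$ verifies~\eqref{wsc} at each $j$, and Proposition~\ref{suffi} delivers the sufficient decrease condition, completing the proof.
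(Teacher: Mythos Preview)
Your proposal is correct and follows the same route as the paper's proof: rewrite $e(x_j)=\|\nabla r(x_j)-\delta\|$, telescope via the triangle inequality and the $L_r$-Lipschitz bound on $\nabla r$ to get $e(x_{j+1})\le L_r\|x_{j+1}-x_j\|+e(x_j)$, conclude $e(x_j)\le\bar e_j$ by induction, and invoke Proposition~\ref{suffi}. Your treatment is in fact more careful than the paper's, which omits the base-case discussion; your remark that $\bar e_1=0$ is only a valid upper bound when the compensation point $\tilde x$ coincides with $x_1$ is exactly the hidden assumption the paper's proof relies on.
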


\begin{proof}
  By the triangle inequality and the smoothness of $r$, we have
  \begin{align}
    e (x_{j + 1}) &=  \| \nabla r (x_{j + 1}) - \delta \| \nonumber\\
    & \leq  \| \nabla  r (x_{j + 1}) - \nabla  r (x_j) \| + \| \nabla  r
    (x_j) - \delta \| \nonumber\\
    & \leq L_r \| x_{j + 1} - x \| + e (x_j) . \nonumber
  \end{align}
  By induction and~\eqref{cons} we have $\bar{e} (x_j)
  \geq e (x_j)$. The theorem follows from Proposition~\ref{suffi}.
\end{proof}

We can augment Algorithm~\ref{algo1} by incorporating condition~\eqref{lre} and use the
compensated model gradient mapping $ \tilde{g} $ to update $x$ only when
\eqref{lre} is satisfied and backtracking line search succeeds in finding a positive step size. The factor
$\gamma$ can be viewed as a hyperparameter of the algorithm; a smaller $\gamma$ will make
\eqref{lre} easier to satisfy thus making our algorithm stay for more iterations in the
\mb. The augmented algorithm is presented as Algorithm~\ref{Algo3:Gradient_Compensation_Algorithm}. As long as $\gamma$ is small enough, condition~\eqref{lre} will always be satisfied when the algorithm stays in the model-based regime at $x_{j}$ unless $\tilde{g}\left(x_{j}\right)=0$. When $\tilde{g}(x_{j})=0$, it implies that $x_{j}$ is a stationary point as predicted by $\tilde{g}$, at which point it becomes necessary to quit the model-based regime to verify whether $x_{j}$ is truly a stationary point that satisfies $\nabla f(x_{j})=0$.

\begin{algorithm}
    \caption{Gradient Compensation Algorithm}
    \label{Algo3:Gradient_Compensation_Algorithm}
    \begin{algorithmic}[1]
            \State Start from $ x^{(1)} = \hat{x}^{\star} $, $ \bar{e}_{0} = 0 $ and $ \delta_1 = \nabla r(\hat{x}^{\star}) $.
            \For{$ i = 1:\texttt{iters} $} 
            \State $x^{(i+1)} \la $ \textsc{ModelBasedDescent}$(x^{(i)},\delta_i,\gamma, \nhf,   \bar{e}_0  )$ 
            \State $\delta_{i+1} \la \nabla r(x^{(i+1)}) = \nabla f(x^{(i+1)}) - \nhf(x^{(i+1)})$
            \State $ \eta \gets  $ \textsc{BtLineSearch}$ (x,f,\nabla f(x^{(i+1)}), \alpha, \beta, 0, \etamax) $ %
            \Comment{Standard backtracking line search}
            \State $x^{(i+1)}_{+} \la x^{(i+1)} - \eta \nabla f(x^{(i+1)})$
            \State $ \bar{e}_0 \gets L_r  \| x^{(i+1)}_{+} - x^{(i+1)}\| $
            \State $ x^{(i+1)} \gets x^{(i+1)}_{+}$ 
            \EndFor
        \end{algorithmic}	
    
        \begin{algorithmic}[1]
          \Algphase{Algorithm \normalfont\textsc{ModelBasedDescent}}
            \Require $ x $, $ \delta $, $ \gamma $, $ \nhf $, $ \eta_{\min} $, $ \alpha $, $ \beta $, $ \bar{e}_0 $ 
            \Ensure $ x $
            \State $ x_{+} \gets x $
            \State $\tilde{g} \gets \nabla\hat{f} + \delta $
            \State $\bar{e} \gets \bar{e}_0$
            \While{ $\bar{e} \leq \frac{1-\gamma}{\gamma} \|\tilde{g}\| $ and \textsc{BtLineSearch}$ (x,f,\tilde{g}, \alpha, \beta, \etamin, \etamax) \neq 0 $} 
            \State $ x \gets x_{+} $
            \State $\tilde{g} \gets \nabla\hat{f} + \delta $
            \State $ \eta \gets  $ \textsc{BtLineSearch}$ (x,f,\tilde{g}, \alpha, \beta, \etamin, \etamax) $ %
            \Comment{Modified backtracking line search}
            \State $ x_{+} \gets x - \eta \tilde{g} $
            \State $ \bar{e} \gets L_r  \| x_{+} - x\| + \bar{e} $
            \EndWhile
        \end{algorithmic}	
    \end{algorithm}

\subsection{Progress in the model-based regime}\label{subsec: Progress in the model-based regime}

Theorem~\ref{problre} shows that under condition~\eqref{lre}, the success
of the backtracking line search implies a sufficient decrease in the
objective function. An immediate consequence of sufficient decrease is that the suboptimality gap in the model-based regime decreases geometrically, as described in the following proposition.

\begin{proposition}
  \label{propmain}Suppose $f$ satisfies Assumption~\ref{Ass:PL_condition}, condition~\eqref{lre} holds, and the backtracking line search using the compensated model gradient mapping
  $\tilde{g}$ succeeds at
  $x_j$ for all $j = 1, 2, \ldots N$. Then
  \begin{equation}
    f (x_{N + 1}) - f^{\star} \leq (1 - 2 \mu \alpha \gamma^2
    \eta_{\min})^{{N } } (f (x_1) - f^{\star}), \label{main}
  \end{equation}
  where $\eta_{\min}$ is the lower bound on the step size, and $N$ is the number of
  iterations in the model-based regime between two model-free iterations.
\end{proposition}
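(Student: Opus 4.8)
The plan is to reduce the geometric-decay estimate~\eqref{main} to a single-step contraction and then iterate it over the model-based regime. First I would extract a per-iteration sufficient-decrease inequality. Since condition~\eqref{lre} holds and the backtracking line search using $\tilde{g}(x_j)$ succeeds at every $x_j$, the hypotheses of Theorem~\ref{problre} (and hence of Proposition~\ref{suffi}) are met at each step, and the quantitative estimate~\eqref{pfprop} established inside the proof of Proposition~\ref{suffi}, applied at $x = x_j$ with $x_+ = x_{j+1} = x_j - \eta_j \tilde{g}(x_j)$, yields
\begin{equation}
f(x_{j+1}) \leq f(x_j) - \alpha\gamma^2 \eta_j \|\nabla f(x_j)\|^2
\end{equation}
for every $j = 1,\dots,N$, where $\eta_j$ is the step size returned by the modified line search.

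Next I would remove the dependence on the per-step step size. Because the modified backtracking line search always returns $\eta_j \in (\eta_{\min},\eta_{\max})$ upon success, we have $\eta_j > \eta_{\min}$, and since $\alpha\gamma^2\|\nabla f(x_j)\|^2 \geq 0$ this gives the uniform bound $f(x_{j+1}) \leq f(x_j) - \alpha\gamma^2\eta_{\min}\|\nabla f(x_j)\|^2$. I then invoke the PL-condition~\eqref{pl} from Assumption~\ref{Ass:PL_condition} in the form $\|\nabla f(x_j)\|^2 \geq 2\mu(f(x_j) - f^{\star})$ and substitute, obtaining the single-step contraction
\begin{equation}
f(x_{j+1}) - f^{\star} \leq (1 - 2\mu\alpha\gamma^2\eta_{\min})\,(f(x_j) - f^{\star}).
\end{equation}

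Finally I would chain this inequality from $j=1$ to $j=N$. Writing $a_j \triangleq f(x_j) - f^{\star} \geq 0$ and $c \triangleq 1 - 2\mu\alpha\gamma^2\eta_{\min}$, the recursion $a_{j+1} \leq c\,a_j$ telescopes to $a_{N+1} \leq c^N a_1$, which is precisely~\eqref{main}. The one point that warrants care, and the thing I expect to be the main obstacle rather than any individual estimate, is the sign of $c$: iterating the bound requires $c \geq 0$, so that multiplying a valid upper bound through by $c$ preserves its direction across the induction. This holds exactly when $2\mu\alpha\gamma^2\eta_{\min} \leq 1$, a mild condition satisfied whenever the step-size floor $\eta_{\min}$ is chosen small enough; it simultaneously guarantees $c \in [0,1)$, so the bound genuinely describes geometric decay. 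Every other inequality in the argument is an immediate consequence of results already established, so the proof amounts to assembling the per-step decrease, the step-size floor, and the PL-condition, and then performing this careful telescoping.
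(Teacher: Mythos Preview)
Your proposal is correct and follows essentially the same approach as the paper: both combine the per-step decrease~\eqref{pfprop} from Proposition~\ref{suffi} with the PL-condition to obtain a one-step contraction, then iterate. The only cosmetic difference is ordering—the paper keeps $\eta_j$ in the contraction factor and bounds the product $\prod_j(1-2\mu\alpha\gamma^2\eta_j)$ by $(1-2\mu\alpha\gamma^2\eta_{\min})^N$ at the end, whereas you replace $\eta_j$ by $\eta_{\min}$ before telescoping; your added remark on the sign of $c$ is a reasonable caveat the paper leaves implicit.
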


\begin{proof}
  The proof is adopted from the one for~\cite[Theorem 1]{frasconi_linear_2016}.
  By Assumption~\ref{Ass:PL_condition} and~\eqref{pfprop},
  \begin{equation}
    f (x_{j + 1}) - f^{\star} \leq (1 - 2 \mu \alpha \gamma^2 \eta_j)  (f
    (x_j) - f^{\star}) . \label{eq23}
  \end{equation}
  Applying~\eqref{eq23} recursively yields
  \[ f (x_{N + 1}) - f^{\star} \leq \underset{j}{\prod} (1 - 2 \mu
     \alpha \gamma^2 \eta_j)  (f (x_1) - f^{\star}) \leq (1 - 2 \mu
     \alpha \gamma^2 \eta_{\min})^N (f (x_1) - f^{\star}) . \]
\end{proof}

The \emph{relative error reduction} in the model-based regime of our
algorithm, according to Proposition~\ref{propmain}, is given by $(1 - 2 \mu
\alpha \gamma^2 \eta_{\min})^{N} $, where $N$ is the total number of
iterations in the model-based regime. The relative error reduction represents the progress made in the model-based regime. 
Although it seems that a larger $N$ will
lead to a larger error reduction, this is not true in general. To make the algorithm spend more iterations in the model-based regime, it is necessary to decrease either $\etamin$ to prevent the failure of line search or $\gamma$ to make condition~\eqref{lre} easier to satisfy, both of which can potentially make $(1 - 2 \mu \alpha \gamma^2 \eta_{\min})^N $ larger even if $N$ grows.

In Theorem~\ref{thnmin} that will soon follow, we will show that the algorithm can ensure sufficient decrease for at least a few model-based iterations when backtracking line search succeeds. Before presenting the theorem, we need to introduce the following definition. 

\begin{definition}[Bounded update direction]
  \label{def:bounden_update_direction}
  An update direction mapping $\Delta$
  is called a \emph{bounded update direction} if there exist $\kappa_{\min}$ and
  $\kappa_{\max}$ satisfying $\kappa_{\min} \in (0, 1)$ and $\kappa_{\max} >
  \kappa_{\min}$ such that for any $x$ and any bounded step size $\eta \in
  (\eta_{\min}, \eta_{\max})$, it holds that
  \begin{equation}
    \kappa_{\min} \| \Delta (x) \| \leq \| \Delta (x_+) \| \leq
    \kappa_{\max} \| \Delta (x) \|,
  \end{equation}
  where $x_+ = x - \eta \Delta (x)$.
\end{definition}

We give some examples in the following to show how the bounded update direction
is related to our compensated model gradient mapping $ \tilde{g} $.

\begin{example}
  \label{scaex}Consider the quadratic function $f$ defined by $f (x) =
  (1 / 2) (x^2 + c_2 (x - c_3)^2),$ It can be shown that the compensated model
  gradient mapping $\tilde{g}$, given by $\tilde{g} (x) = x + \delta$, is a bounded
  updated direction. This is because  
  \begin{align}
     \tilde{g} (x_+) &= x_+ + \delta = x  - \eta  \tilde{g} (x ) + \delta
    \nonumber\\
     \qquad &= x  - \eta  (x  + \delta) + \delta = (1 - \eta ) (x  + \delta)
    = (1 - \eta ) \tilde{g} (x ) . \nonumber
  \end{align}
  Since $\eta \in (\eta_{\min}, \eta_{\max})$, it suffices to choose $\kappa_{\max} = 1 - \eta_{\min}$ and $\kappa_{\min} = 1 -
  \eta_{\max}$ to establish that $\tilde{g}$ is a bounded updated direction.
\end{example}

\begin{example}
  \label{exquad}Consider the function $f$ defined as 
  \begin{equation}
    f (x) = \frac{c_1}{2} x^{\top} P x + \frac{c_2}{2} (x - c_3)^{\top} Q (x - c_3)
    \label{quad}
  \end{equation}
  where $P$ and $Q$ are positive definite. Without loss of generality, we choose $ c_1 = 1 $. It can be shown that the compensated model gradient mapping   
  $\tilde{g}$, given by $\tilde{g} (x) = P x + \delta$, is a bounded update
  direction: Let $x_+ = x - \eta \tilde{g} (x)$. It holds that $\tilde{g} (x_+) = (1 -
  \eta P) (P x + \delta) = (1 - \eta  P) \tilde{g} (x)$. When $\eta \in (\eta_{\min},
  \eta_{\max})$, one can choose $\kappa_{\max} =
  1 - \eta_{\min} \lambda_{\min} (P)$ and $\kappa_{\min} = 1 - \eta_{\max}
  \lambda_{\max} (P)$, where $\lambda_{\max} (P)$ and $\lambda_{\min} (P)$ are
  the maximum and the minimum eigenvalues of $P$, respectively. 
\end{example}

Intuitively, Algorithm~\ref{Algo3:Gradient_Compensation_Algorithm} will stay in the model-based regime when
the compensated model gradient $ \tilde{g}(x) $ is close to the exact gradient $ \nabla f(x) $. To lower bound the number of iterations the algorithm stays
in the model-based regime, we need to bound the change of the compensated model gradient mapping $ \tilde{g} $ at each iterate, which is captured by the notion of bounded update direction introduced in Definition~\ref{def:bounden_update_direction}. Using the notion of bounded update direction, we present in the following theorem a lower bound on the number of model-based iterations.

\begin{theorem}
  \label{thnmin} Let $ x_1 $ be the initial point when 
  Algorithm~\ref{Algo3:Gradient_Compensation_Algorithm} enters the model-based regime with a compensated model gradient mapping $ \tilde{g} $. Suppose $\tilde{g}$ is a
  bounded updated direction mapping with constants $ \kappa_{\max} $ and $ \kappa_{\min} $, and $ \tilde{g}(x_1) \neq 0 $. 
  Then the success of backtracking line search implies the sufficient decrease condition if $N$ satisfies
  \begin{equation}
    \log | \kappa_{\max}^N - 1 | + N \log  \frac{1}{\kappa_{\min}}
     \leq \log  \frac{1 - \gamma}{\gamma \eta_{\max} L_r}
     + \log | \kappa_{\max} - 1 | \label{nmin1}
  \end{equation}
  when $\kappa_{\max} \neq 1$ and
  \begin{equation}
    \log N + N \log  \frac{1}{\kappa_{\min}}  \leq \log
     \frac{1 - \gamma}{\gamma \eta_{\max} L_r} \label{nmin2}
  \end{equation}
  when $\kappa_{\max} = 1$. 
\end{theorem}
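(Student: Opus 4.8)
The plan is to reduce the statement to verifying the single hypothesis of Theorem~\ref{problre}, namely $\bar e_j \le \frac{1-\gamma}{\gamma}\|\tilde g(x_j)\|$, simultaneously for every model-based iterate. Once that holds, Theorem~\ref{problre} gives the sufficient decrease condition at each step directly, so the entire task becomes determining how large $N$ can be before this inequality can first fail. The bounded update direction hypothesis (Definition~\ref{def:bounden_update_direction}) is exactly the tool that lets me control both sides of the inequality as $j$ grows.

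First I would propagate the bounded update direction bounds along the trajectory. Since $x_{j+1} = x_j - \eta_j\tilde g(x_j)$ with $\eta_j \in (\etamin,\etamax)$, Definition~\ref{def:bounden_update_direction} gives $\kappa_{\min}\|\tilde g(x_j)\| \le \|\tilde g(x_{j+1})\| \le \kappa_{\max}\|\tilde g(x_j)\|$ at every step, and a straightforward induction yields
\[
  \kappa_{\min}^{\,j-1}\|\tilde g(x_1)\| \le \|\tilde g(x_j)\| \le \kappa_{\max}^{\,j-1}\|\tilde g(x_1)\|.
\]
Because $\tilde g(x_1)\neq 0$ and $\kappa_{\min}>0$, this simultaneously certifies $\tilde g(x_j)\neq 0$ throughout, so the bounds are nontrivial.

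Next I would bound the accumulated error $\bar e_j$. From $\|x_{j+1}-x_j\| = \eta_j\|\tilde g(x_j)\| \le \etamax\|\tilde g(x_j)\|$ together with the recursion $\bar e_{j+1} = L_r\|x_{j+1}-x_j\| + \bar e_j$ and $\bar e_1 = 0$, unrolling gives $\bar e_j = L_r\sum_{k=1}^{j-1}\eta_k\|\tilde g(x_k)\|$. Substituting the upper bound on $\|\tilde g(x_k)\|$ produces a geometric sum,
\[
  \bar e_j \le L_r\etamax\|\tilde g(x_1)\|\sum_{k=1}^{j-1}\kappa_{\max}^{\,k-1}
  = L_r\etamax\|\tilde g(x_1)\|\,\frac{|\kappa_{\max}^{\,j-1}-1|}{|\kappa_{\max}-1|},
\]
when $\kappa_{\max}\neq 1$ (the absolute values cover both $\kappa_{\max}>1$ and $\kappa_{\max}<1$, since numerator and denominator share a sign), and $\bar e_j \le (j-1)L_r\etamax\|\tilde g(x_1)\|$ when $\kappa_{\max}=1$. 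Combining with the lower bound $\|\tilde g(x_j)\|\ge\kappa_{\min}^{\,j-1}\|\tilde g(x_1)\|$ and cancelling the common factor $\|\tilde g(x_1)\|$, the desired inequality $\bar e_j \le \frac{1-\gamma}{\gamma}\|\tilde g(x_j)\|$ is implied by
\[
  L_r\etamax\,\frac{|\kappa_{\max}^{\,j-1}-1|}{|\kappa_{\max}-1|} \le \frac{1-\gamma}{\gamma}\,\kappa_{\min}^{\,j-1}.
\]

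Finally I would show that it suffices to enforce this scalar inequality at the last model-based iterate. Its left side is nondecreasing in $j$ (the partial geometric sum grows), while its right side is nonincreasing because $\kappa_{\min}\in(0,1)$; hence validity at the largest index forces validity at all earlier indices. Taking logarithms at that last step and separating the $N$-dependent terms from the constants yields exactly \eqref{nmin1}, while the branch $\kappa_{\max}=1$ collapses the geometric sum to a factor $N$ and gives \eqref{nmin2}. I expect the main difficulty to be bookkeeping rather than conceptual: one must (i) treat the regimes $\kappa_{\max}>1$, $\kappa_{\max}<1$, and $\kappa_{\max}=1$ uniformly via the absolute values, (ii) justify the monotonicity claim that reduces the family of inequalities to a single one at the final iterate, and (iii) track the index convention, since whether the exponent reads $N$ or $N-1$ depends on how many error increments are accumulated before the comparison, in particular on the nonzero $\bar e_0$ carried over from the preceding model-free step.
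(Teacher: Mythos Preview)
Your proposal is correct and follows essentially the same route as the paper: bound $\bar e_j$ above by the geometric sum $L_r\eta_{\max}\sum_{k}\kappa_{\max}^{k-1}\|\tilde g(x_1)\|$, bound $\|\tilde g(x_j)\|$ below by $\kappa_{\min}^{j-1}\|\tilde g(x_1)\|$, compare, and take logs. The paper frames it as a contrapositive (assume \eqref{lre} fails at the final index and derive the negation of \eqref{nmin1}/\eqref{nmin2}), whereas you argue directly and add an explicit monotonicity step to reduce the family of inequalities to the last one; these are equivalent, and your caveat (iii) about the $N$ versus $N{+}1$ exponent is exactly the bookkeeping that aligns your direct version with the paper's indexing.
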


\begin{proof}
  Suppose backtracking line search does not ensure sufficient decrease after
  $N$ steps --- that is,
  \begin{equation}
    \bar{e}_{N + 1} > \frac{1 - \gamma}{\gamma} \| \tilde{g} (x_{N + 1}) \| .
    \label{largerel}
  \end{equation}
  When $\kappa_{\max} \neq 1$,
  \[ \bar{e}_{N + 1} = L_r  \underset{j = 1}{\overset{N}{\sum}} \| x_{j
     + 1} - x_j \|  = L_r  \underset{j = 1}{\overset{N}{\sum}}
     \eta_j \| \tilde{g} (x_j) \|  \leq L_r \eta_{\max} \frac{1 -
     \kappa_{\max}^N}{1 - \kappa_{\max}} \| \tilde{g} (x_1) \|. \]
  Combine with equation~\eqref{largerel} to obtain
  \[ L_r \eta_{\max} \frac{1 - \kappa_{\max}^N}{1 - \kappa_{\max}} \|
     \tilde{g} (x_1) \| > \frac{1 - \gamma}{\gamma} \| \tilde{g} (x_{N + 1})
     \| \geq \frac{1 - \gamma}{\gamma} \kappa_{\min}^N \| \tilde{g} (x_1)
     \| . \]
  Rearranging the above formula yields
  \begin{equation} \label{eq:nmin1_contra}
  \log | \kappa_{\max}^N - 1 | + N \log \frac{1}{\kappa_{\min}}
   > \log  \frac{1 - \gamma}{\gamma \eta_{\max} L_r} +
  \log | \kappa_{\max} - 1 | .
  \end{equation}%
  Similarly, when $\kappa_{\max} = 1$, by the triangle inequality, 
  \[ \bar{e}_{N + 1} = L_r  \underset{j = 1}{\overset{N}{\sum}} \| x_{j
     + 1} - x_j \|  = L_r  \underset{j = 1}{\overset{N}{\sum}}
     \eta_j \| \tilde{g} (x_j) \|  \leq L_r \eta_{\max} N \|
     \tilde{g} (x_1) \| . \]
  Thus, the same procedure as above gives us
  \begin{equation}\label{eq:nmin2_contra}
    \log N + N \log  \frac{1}{\kappa_{\min}}  > \log
    \frac{1 - \gamma}{\gamma \eta_{\max} L_r}.
  \end{equation}
  The theorem follows by taking the contrapositive of the results in~\eqref{eq:nmin1_contra} and~\eqref{eq:nmin2_contra}.
\end{proof}

The left-hand side of~\eqref{nmin1} and that of~\eqref{nmin2} are monotonically
increasing with respect to $N$. Let $N_{\min}$ be the largest $N$ satisfying
\eqref{nmin1} or~\eqref{nmin2}, depending on the value of $\kappa_{\max}$. Theorem~\ref{thnmin} ensures that our algorithm makes sufficient progress
in the model-based regime for at least $N_{\min}$
iterations as long as backtracking line search succeeds. Because the right-hand side of~\eqref{nmin1} and~\eqref{nmin2} are
monotonically decreasing as $ L_r $ grows, a smaller $ L_r $ is desired for making the algorithm stay for more iterations in the model-based regime when backtracking line search succeeds. Recall in Section~\ref{sec2:Background: Nonlinear optimal control with quadratic
cost}, we require $h$ to be ``small'' without formally defining how to quantify the magnitude of $h$. Theorem~\ref{thnmin} indicates that the smoothness constant $ L_r $ of the residual cost $r$ is a useful metric for quantifying the magnitude of $h$. Indeed, when the dynamics in~\eqref{dyn} are nearly linear, the constant $L_r$ will be small, and the algorithm is guaranteed to make better use of model information by staying in the model-based regime for more iterations. In the extreme case of $h \equiv 0$, the residual cost $r \equiv 0$ and hence $L_r = 0$, implying that the algorithm always stays in the model-based regime. 

\subsection{Number of function evaluations}\label{subsec: Number of func eval}

In Section~\ref{subsec: Progress in the model-based regime}, we showed that Algorithm~\ref{Algo3:Gradient_Compensation_Algorithm} will stay in the model-based regime for at least some iterations and make progress when backtracking line search succeeds. We will show in the following that the model-based regime requires fewer function evaluations than the model-free regime for making the same progress when the dimension $n$ of the problem is large.

Recall that, when gradient descent is applied to strongly convex and smooth objectives, the objective values for two successive iterations $i$ and $i+1$ satisfy 
\begin{equation}
f(x_{i+1})-f^{\star}=\rho(f(x_{i})-f^{\star}),\label{eq:linear_conv}
\end{equation}
where $\rho$ is the convergence ratio. After $N$ iterations, the relative error reduction is given by $\phi\triangleq(f(x_{N})-f^{\star})/(f(x_{0})-f^{\star})=\rho^{N}$. We propose to characterize the progress made per iteration by $|\log\phi|/N$, which recovers $|\log\rho|$ and hence is independent of $N$ when~\eqref{eq:linear_conv} holds. Similarly, we define the progress made per function evaluation as $|\log\phi|/(mN)$, where $m$ is the number of function evaluations in each iteration.

By Assumption~\ref{Ass:func_evas}, at any point $x$, the number of function evaluations for computing $\nabla f(x)$ is $n$, whereas the number of function evaluations for computing $\nabla\hat{f}(x)$ is zero. From Algorithm~\ref{BtLS}, the maximum number $m_{\max}$ of function evaluations inside the backtracking line search subroutine is the smallest integer $m$ satisfying $\beta^{m-1}\eta_{\max}\leq\eta_{\min}$. Thus, 
\[
m_{\max}=\left\lceil \frac{\log(\etamin/\etamax)}{\log\beta} + 1 \right\rceil ,
\]
which only depends on the parameters of backtracking line search. Also, from Proposition~\ref{propmain}, the relative error reduction in the model-based regime is $(1-2\mu\alpha\gamma^{2}\eta_{\min})^{N}$. Thus, the progress made per function evaluation in the model-based regime is lower bounded by 
\begin{equation}
\frac{|\log(1-2\mu\alpha\gamma^{2}\eta_{\min})^{N}|}{m_{\max}N}=\frac{|\log(1-2\mu\alpha\gamma^{2}\eta_{\min})|}{m_{\max}},\label{eq:progress made per iteration in the model-based regime}
\end{equation}
which is a constant when the parameters of backtracking line search and $\gamma$ are fixed. In the meantime, when $f$ is assumed to be $L_{f}$-smooth~\cite{frasconi_linear_2016}, the convergence ratio $\rho$ in the model-free regime is $1-\mu/L_{f}$. Also, from Assumption~\ref{Ass:func_evas}, the number of function evaluations in each iteration in the model-free regime is at least $n$, which includes $n$ evaluations needed for computing the exact gradient and any additional evaluations used in backtracking line search. Thus, the progress made per function evaluation is upper bounded by 
\begin{equation}
\frac{|\log \rho|}{n}=\frac{|\log(1-\mu/L_{f})|}{n},\label{eq:progress made per iteration in the model-free regime}
\end{equation}
which decreases with respect to the problem dimension $n$. 

When the dimension $n$ of the problem is large enough, the quantity in \eqref{eq:progress made per iteration in the model-based regime} will be larger than that in~\eqref{eq:progress made per iteration in the model-free regime}, implying that more progress is made per function evaluation in the model-based regime. Since Algorithm~\ref{Algo3:Gradient_Compensation_Algorithm} always makes progress in the model-based regime as discussed in Section~\ref{subsec:sufficient decrease condition}, the algorithm is guaranteed to use fewer function evaluations than a purely model-free method for achieving the same level of suboptimality.

\section{Numerical experiments}

In this section, we shall refer to Algorithm~\ref{Algo3:Gradient_Compensation_Algorithm} as GC, which stands for ``gradient compensation.'' We compare GC with the method in~\cite{qu_combining_2020} in two
different settings: (1) Both $\hat{f}$ and $r$ are convex quadratic functions. ($f = \hat{f} + r$ is
  convex and smooth.) (2) The objective function is given by $C$ in~\eqref{decop}, and $\hat{f}$ is given by $\hat{C}$ from Section~\ref{sec2:Background: Nonlinear optimal control with quadratic
  cost}. ($C$ is non-convex and locally
  smooth.)

\subsection{Quadratic functions}\label{sec:quad_func}

We consider the quadratic functions $ \hat{f} $, $ r \colon \mathbb{R}^{n} \rightarrow \mathbb{R}$ defined as $ \hat{f}(x) =  c_1 x^{\top} P x / 2 $ and $ r(x) = c_2 (x - c_3)^{\top} Q (x -
c_3)/2 $, where both $P$ and $Q$ are positive definite. To ensure that $P$ is positive
definite, we generated its square root $P^{1 / 2}$ randomly, with each entry
drawn independently from the standard Gaussian distribution, after which $P$ was
computed by $P = P^{1 / 2} P^{1 / 2}$. The matrix $Q$ was generated randomly from its spectral decomposition with
a spectral radius of $10$. We chose $ c_1 = 1 $, $c_2 = c_3 =
0.1$, and $ n = 4 $. Thus, the smoothness factor $L_r$ is given by $L_r = c_2 = 0.1$.
The parameters of backtracking line search were given by $\alpha = 0.3$, $\beta = 0.5$, 
$\eta_{\max} = 1$, and $\eta_{\min} = 0.005$.

\paragraph{Results}

\begin{figure}
	\centering{
	\hfill%
	\begin{subfigure}[b]{0.3\textwidth}
		\centering
		\includegraphics[width=\textwidth]{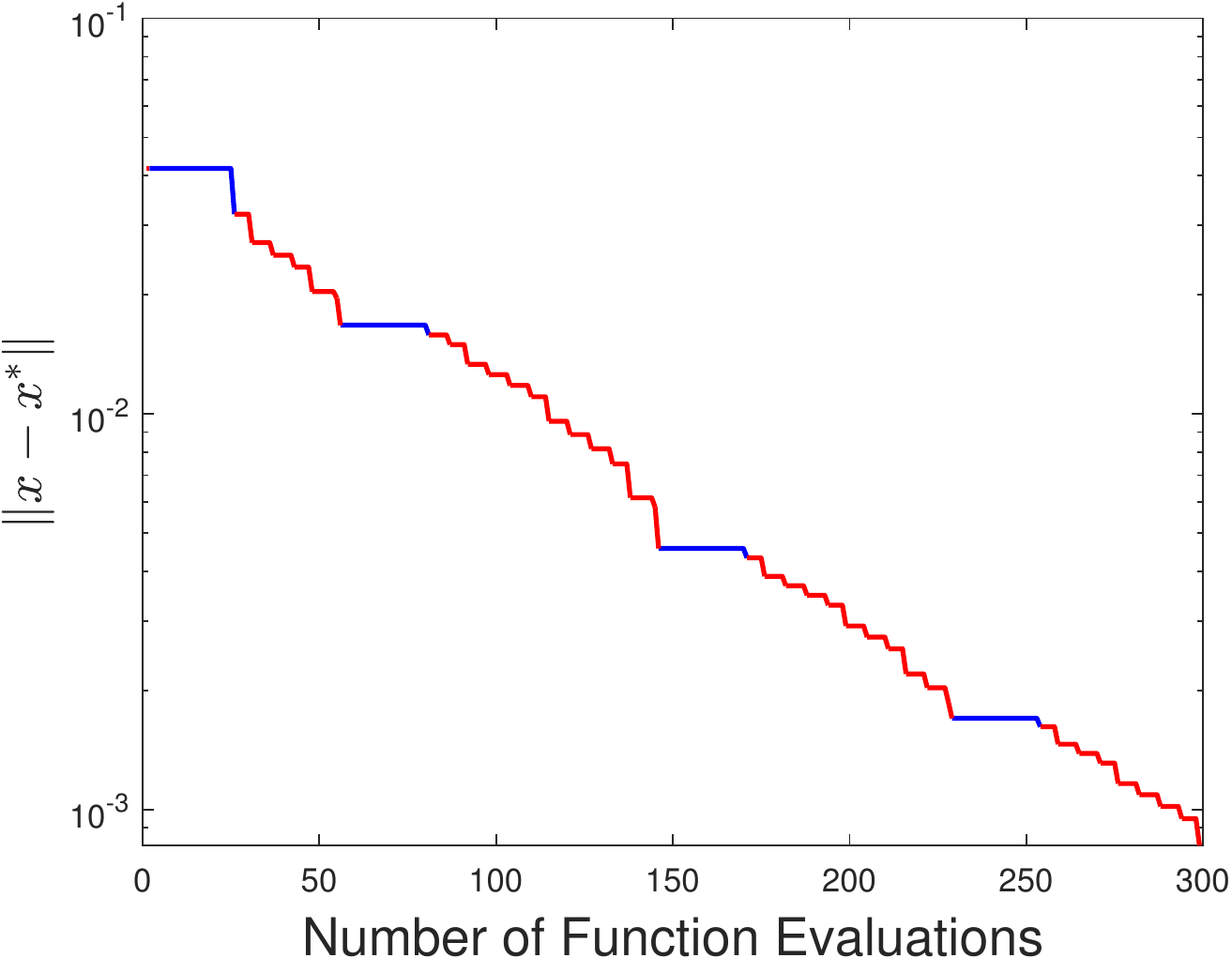}
		\caption{}
		\label{fig:gc_quad}
	\end{subfigure}
	\hfill%
	\begin{subfigure}[b]{0.3\textwidth}
		\centering
		\includegraphics[width=\textwidth]{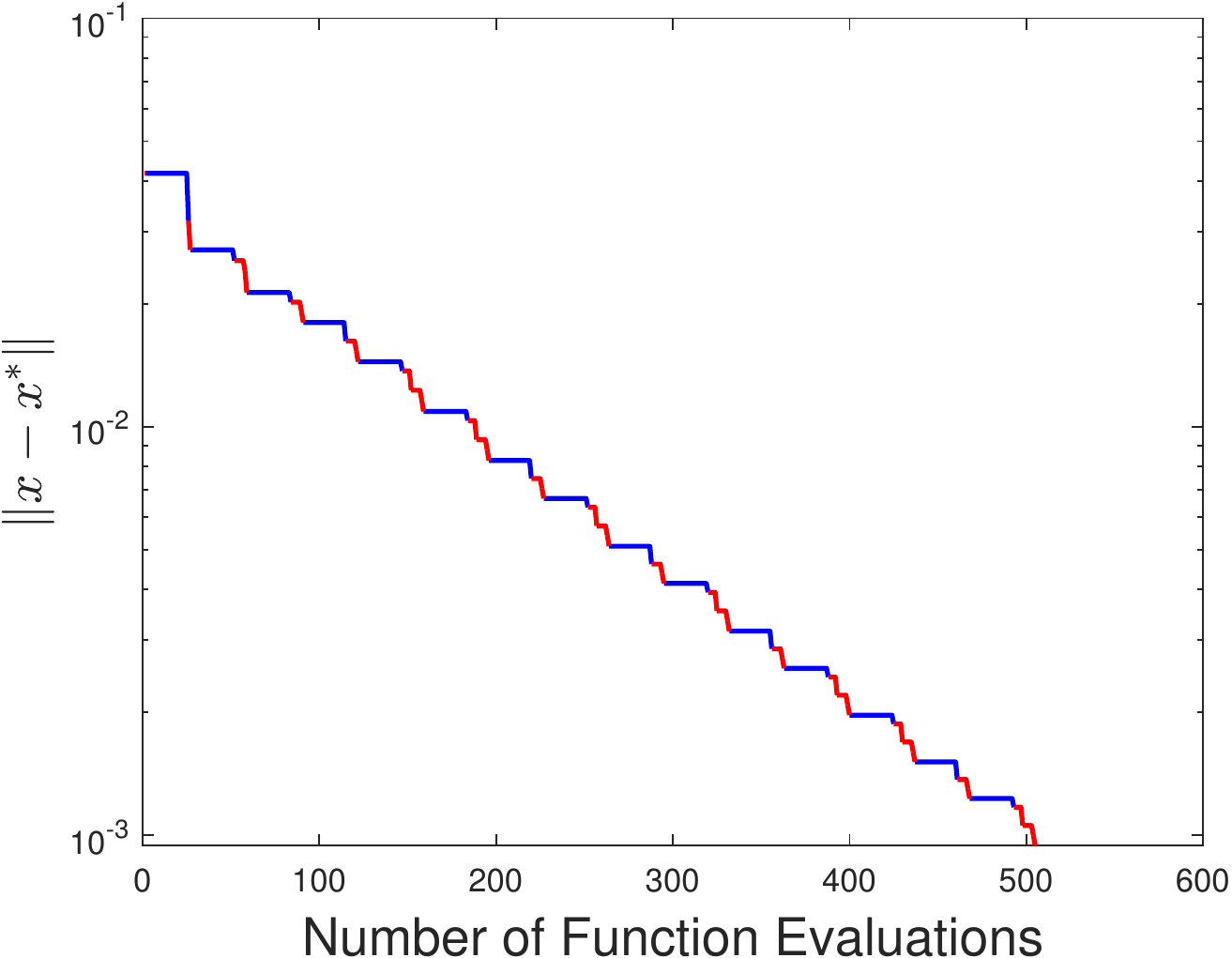}
		\caption{}
		\label{fig:quad_large_gamma}
	\end{subfigure}
	\hfill%
	\begin{subfigure}[b]{0.3\textwidth}
		\centering
		\includegraphics[width=\textwidth]{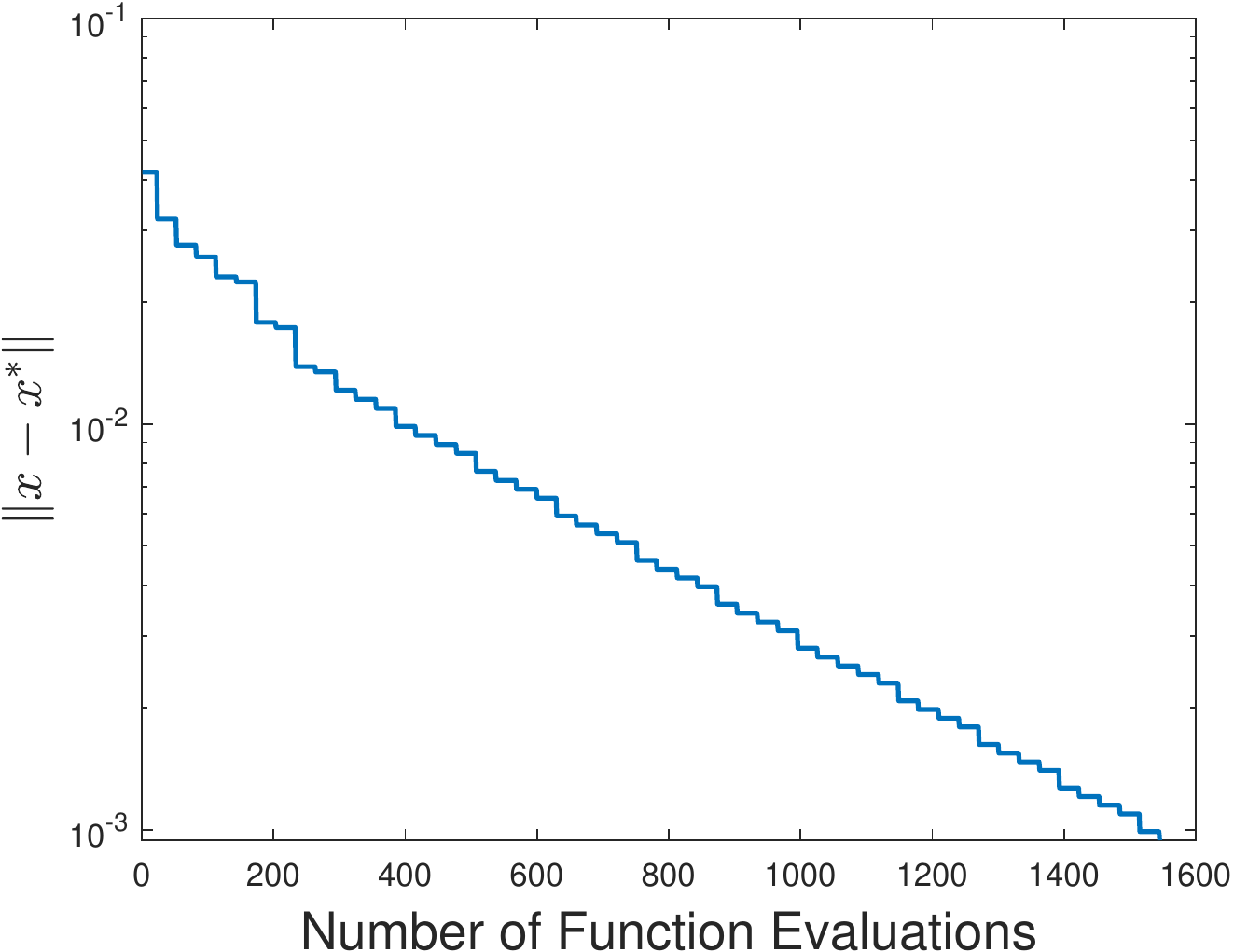}
		\caption{}
		\label{fig:qu_quad}
	\end{subfigure}	
	\hfill%
}
	\caption{Comparison among the convergence of (a) the GC algorithm with $\gamma = 0.6$, (b) the GC algorithm with $\gamma = 0.9$, and (c) the model-free method in~\cite{qu_combining_2020} for quadratic functions. For the GC algorithm, the iterations in the model-free regime are indicated in blue, and the ones in the model-based regime are indicated in red. When $\gamma$ was chosen appropriately, the GC algorithm used fewer function evaluations than the model-free method because it used relatively inexpensive model-based iterations to make progress.}
	\label{fig:quad}
\end{figure}

The results are shown in Fig.~\ref{fig:quad}, where Fig.~\ref{fig:gc_quad} and~\ref{fig:quad_large_gamma} are from the GC algorithm, and Fig.~\ref{fig:qu_quad} is from the method by Qu et al.~\cite{qu_combining_2020}. The blue part of the curve in Fig.~\ref{fig:gc_quad} and ~\ref{fig:quad_large_gamma} represents the model-free regime, and the red represents the
model-based regime. To better reflect the actual computational cost, we used the number of function evaluations rather than the number of iterations to quantify the running time of the algorithms. The vertical axis is the error $ \|x - x^\star \| $, plotted in a logarithmic scale, where $ x^\star $ is the 
optimal solution that minimizes $ f \triangleq \hat{f} + r $. We terminated both algorithms when
$ \|x - x^\star \| < 10^{-3} $. The error $ \|x - x^\star \| $ was only updated upon the completion of backtracking line search. Since each test of the Armijo condition (Line~\ref{alg:cond:armijo} in Algorithm~\ref{BtLS}) in backtracking line search requires one function evaluation, the error $ \|x - x^\star \| $ sometimes remained unchanged over multiple consecutive function evaluations when backtracking line search was in progress.

It can be seen that the GC algorithm required fewer function evaluations for achieving the same level of suboptimality.
Whether the GC algorithm stays longer in the model-based regime or in the model-free regime depends on the choice of $\gamma$ in~\eqref{lre}, which determines whether Algorithm~\ref{Algo3:Gradient_Compensation_Algorithm} should continue to stay in the model-based regime. When a larger $ \gamma = 0.9 $ was used, the algorithm stayed for fewer iterations in the model-based regime because condition~\eqref{lre} became more difficult to satisfy.


\subsection{Modified LQR problem}

Consider the modified LQR problem defined by~\eqref{dyn} and~\eqref{LQRcost} in
Section~\ref{sec2:Background: Nonlinear optimal control with quadratic
cost}. The problem instance is similar to the one used in~\cite{qu_combining_2020}. The matrices $A$ and $B$ were generated randomly,
with each entry drawn from the standard Gaussian distribution. To simplify, the nonlinear
error term $h$ in the dynamics~\eqref{dyn} was set to only depend on the system state $ x $. The simplified error term was defined by $h(x) = \ell x / (1 - 0.9 \sin x)$ with $\ell =
0.01$. The quadratic cost in~\eqref{LQRcost} was defined by $Q = 2 I$ and $R = I$. The dimensions of the problem were set as $n = 4$ and $p = 3$. (In comparison, $n
= p = 2$ was used in~\cite{qu_combining_2020}.) The parameters of backtracking line search were given by $\alpha =
0.3$, $\beta = 0.5$, $\eta_{\max} = 1$, and $\eta_{\min} = 0.05$. 

Unlike the quadratic functions studied in Section~\ref{sec:quad_func}, in the setting of modified LQR, it is difficult to obtain the value of $L_r$.
Instead, we treated $L_r$ as a hyperparameter, performed a number of trials with different choices of $L_r$, and reported the best result.

\subsubsection{Computing the gradient}

The gradient of the cost $\hat{C}$ for the linear system can be computed analytically~\cite{fazel_global_2018} as $\nabla
\hat{C} (K) = 2 E_K \Sigma_K$, where $E_K = (R +
B^{\top} P_K B) K - B^{\top} P_K A$ and $\Sigma_K = \mathbb{E}_{x_0 \sim
	\mathcal{D}} \sum_{t=0}^{\infty} x_t x_t^{\top}$. The matrix $P_K$ in the expression for $E_K$ is the solution to the Lyapunov
equation
\begin{equation}
  A_K^{\top} P_K A_K - P_K + Q + K^{\top} R K = 0,
\end{equation}
where $A_K = A - B K$. For the cost $C$ for the nonlinear system, we computed $\nabla C (K)$ by a zeroth-order method with $C$ approximated by a finite-horizon empirical
cost
\begin{equation}
  C_{\tmop{emp}}^T (K) \triangleq \mathbb{E}_{x_0 \sim
  \mathcal{D}} \left( \overset{T}{\underset{t =
  0}{\sum}} x_{t}^{\top} Q x_{t} + u_{t}^{\top} R u_{t} \right),
\end{equation}
where $\mathcal{D}$ is a uniform distribution over $n$ distinct values $x_0^{(1)}, x_0^{(2)}, \ldots, x_0^{(n)}$. The procedure for computing $\nabla C$ is presented in Algorithm~\ref{alg:Compute_nablaC}. In all the experiments, the time horizon $T$ was set to $50$, and the sampling radius $ r $ was set to $ 0.001 $.

\begin{algorithm}
	\caption{Computing the gradient of $ C $}
	\label{alg:Compute_nablaC}
	\begin{algorithmic}[1]
		\Require $ K $, $ r $, $ n $, $ p $, $ T $, $ A $, $ B $, $ h $, $ Q $, $ R $
		\Ensure $ \nabla C (K) $
		\For{ $ j $ = 1, 2, ..., $ pn $ }
		\State \texttt{//} \textsc{reshape}$(\cdot, p\times n)$ converts a $pn$-dimensional vector to a $p\times n$ matrix.
		\State $ U_j \gets \textsc{reshape}(r\cdot e_j, p\times n)$, where $e_j$ is a unit basis vector whose $j$-th entry is $1$.
		\For{ $ k $ = 1, 2, ..., $ n $ }
		\State  $ x_0 \gets x_{0}^{(k)}$
		\For{$ t  = 0,1,...,T$ } 
 		\State	$ u_{t}\gets-\left(K+U_{j}\right) x_{t} $
		\State  $ x_{t+1} = A x_{t} + B u_{t} + h(x_t) $
		\EndFor
		\State $ C_{j}^{(k)}\gets\sum_{t=0}^{T}x_{t}^{\top} Q x_{t}+u_{t}^{\top} R u_{t} $
		\EndFor
		\State $ C_{j} \gets \frac{1}{n} \sum_{k=1}^{n} C_{j}^{(k)} $ 
		\EndFor
		\State \Return $ \nabla C(K)=\frac{1}{d} \sum_{j=1}^{d} \frac{d}{r^{2}} C_{j} U_{j} \text {, where } d=p n $ 
	\end{algorithmic}
\end{algorithm}

\subsubsection{Results}

\begin{figure}
	\centering{
	\hfill%
	\begin{subfigure}[b]{0.3\textwidth}
		\centering
		\includegraphics[width=\textwidth]{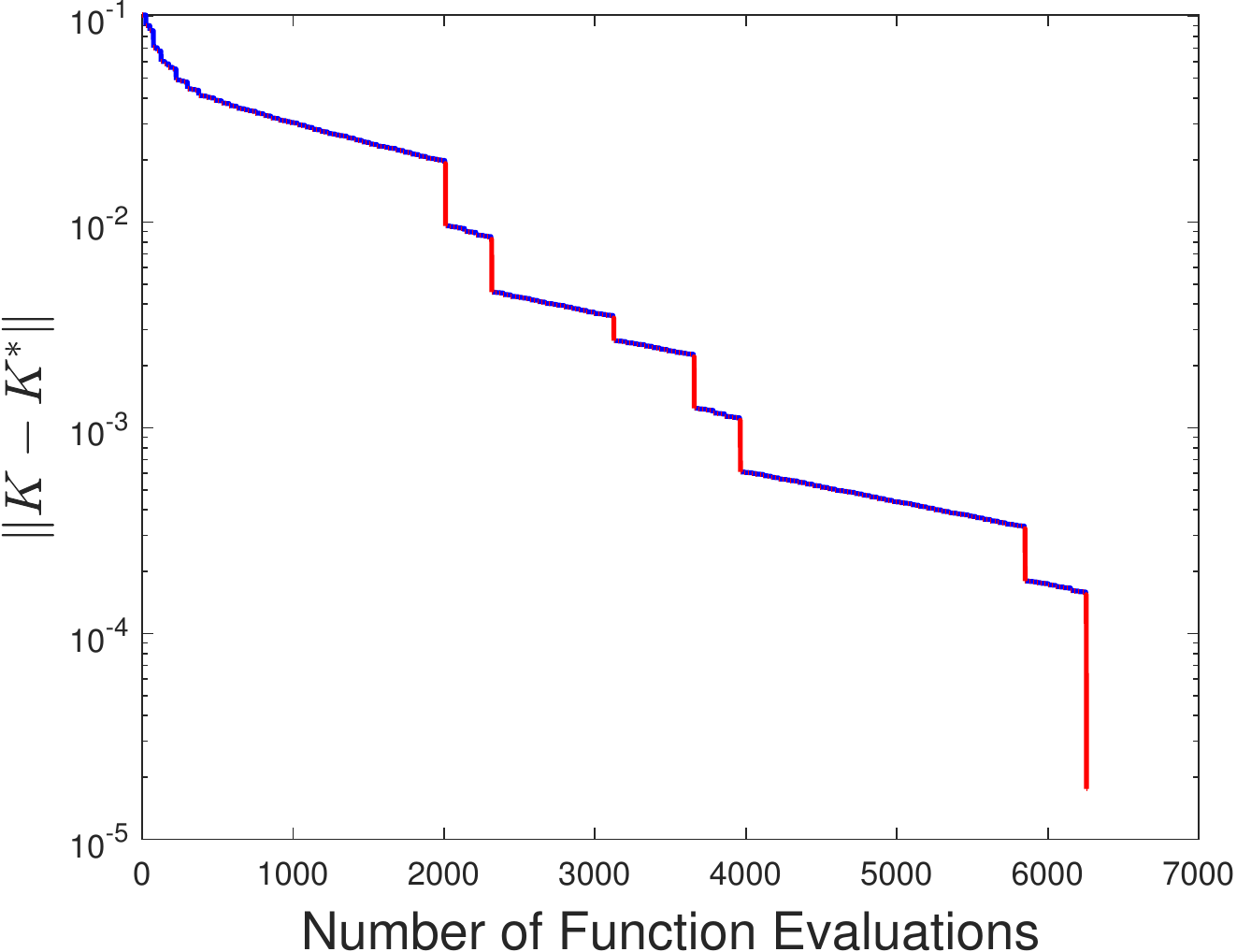}
		\caption{}
		\label{fig:gc_lqr}
	\end{subfigure}
	\hfill%
	\begin{subfigure}[b]{0.3\textwidth}
		\centering
		\includegraphics[width=\textwidth]{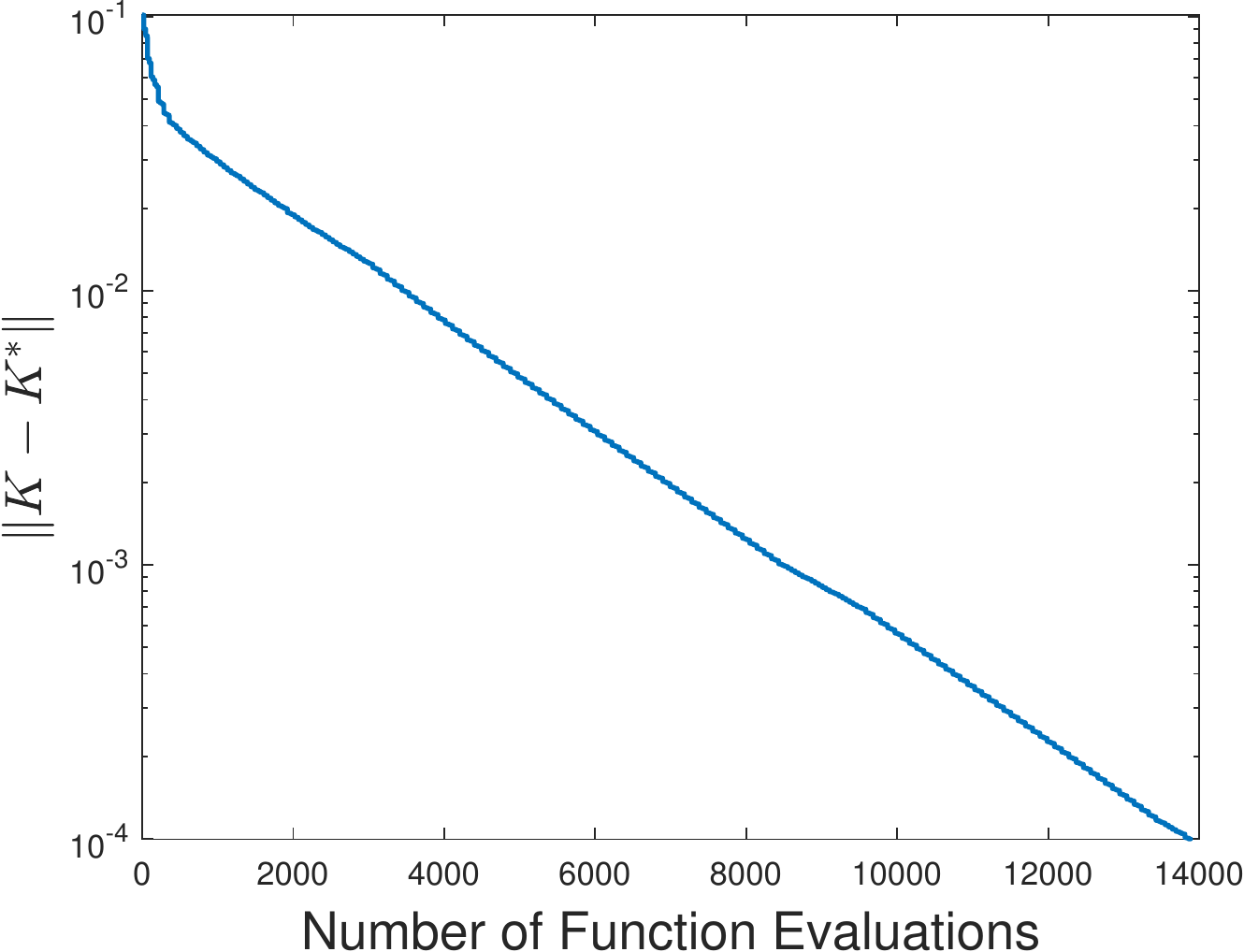}
		\caption{}
		\label{fig:qu_lqr}
	\end{subfigure}	
	\hfill%
}
	\caption{Comparison between the convergence of (a) the GC algorithm with $ \etamin = 0.05 $ and (b) the model-free method in~\cite{qu_combining_2020} for the modified LQR problem. While the GC used fewer function evaluations, it still spent most of the time in the model-free regime, indicated in blue.}
	\label{fig:lqr}
\end{figure}

Fig.~\ref{fig:lqr} compares the result obtained from the GC algorithm and the one from the model-free method by Qu et al.~\cite{qu_combining_2020}. Because the modified LQR does not admit an optimal solution in closed form, the optimal feedback gain $ K^\star $ was computed approximately by running the model-free method in~\cite{qu_combining_2020} until $\|\nabla C({K^\star})\| < 10^{-4}$.
We terminated both algorithms when $ \|K - K^\star \| < 10^{-4} $. 

\paragraph{Effect of $\gamma$ and $L_r$}

The choices of $\gamma$ and $L_r$ affect condition~\eqref{lre}, which is used to monitor whether the sufficient decrease condition holds upon the success of backtracking line search. When the minimum step size $\etamin = 0.05$ was used, both $ \gamma $ and $L_r$ were found to have little effect on the performance of the GC algorithm. 
For instance, as $\gamma$ varied between $ 0.0001 $ to $ 0.9999 $, the convergence of the GC algorithm remained the same, as represented in Fig.~\ref{fig:gc_lqr}. Upon inspecting the numerical simulation results, we found that, for the majority of the time, the GC algorithm was found to leave the model-based regime almost immediately after entering. Moreover, 
the GC algorithm left the model-based regime mostly due to the failure of backtracking line search rather than violating condition~\eqref{lre}. Since neither $\gamma$ nor $L_r$ affects backtracking line search, this explains why changing $\gamma$ and $L_r$ did not help the GC algorithm stay in the model-based regime longer so as to reduce the number of function evaluations.

\paragraph{Effect of $\eta_{\min}$}

\begin{figure}
	\centering{
	\hfill%
	\begin{subfigure}[b]{0.3\textwidth}
		\centering
		\includegraphics[width=\textwidth]{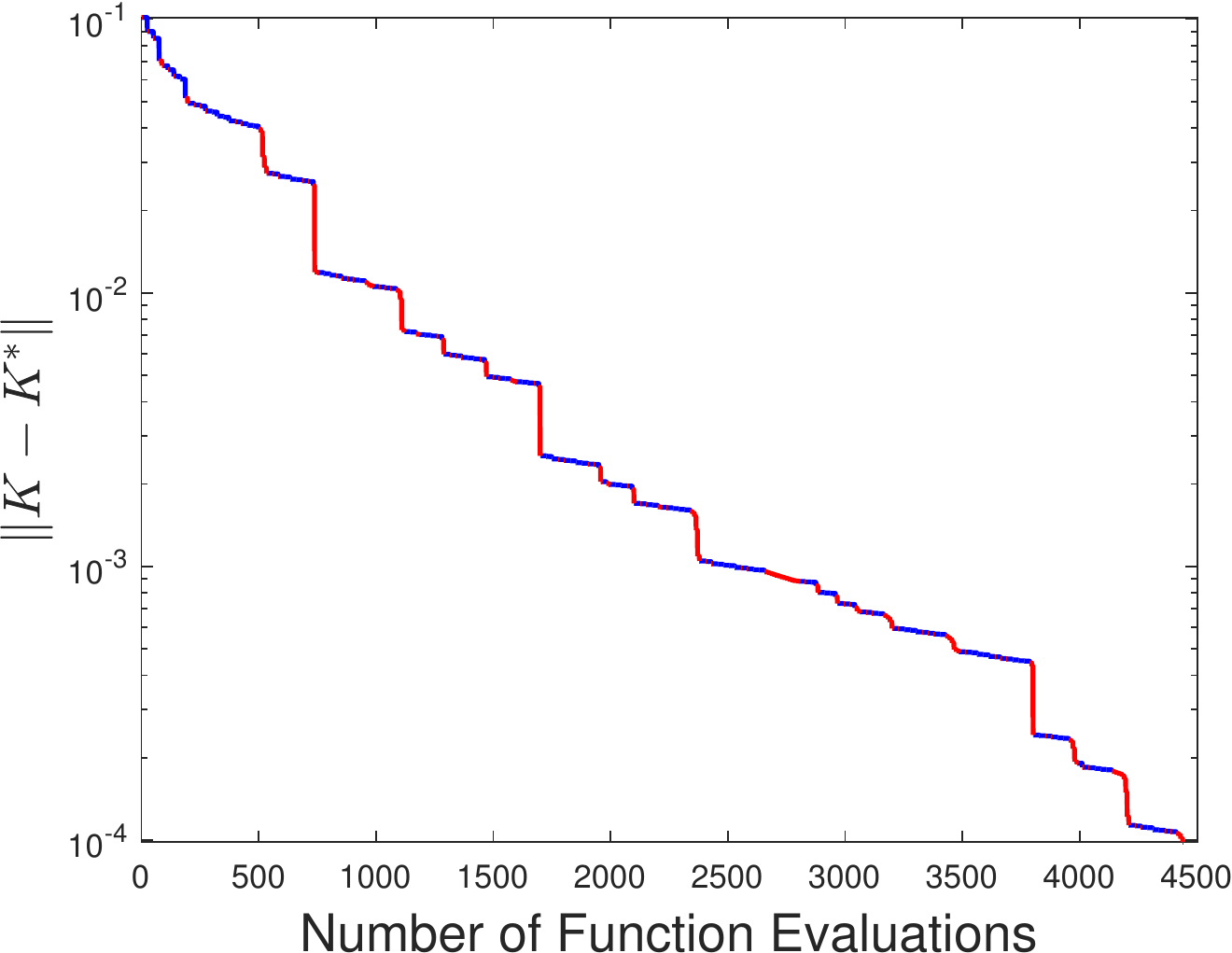}
		\caption{}
		\label{fig:gc_lqr_small_eta_min}
	\end{subfigure}
	\hfill%
	\begin{subfigure}[b]{0.3\textwidth}
		\centering
		\includegraphics[width=\textwidth]{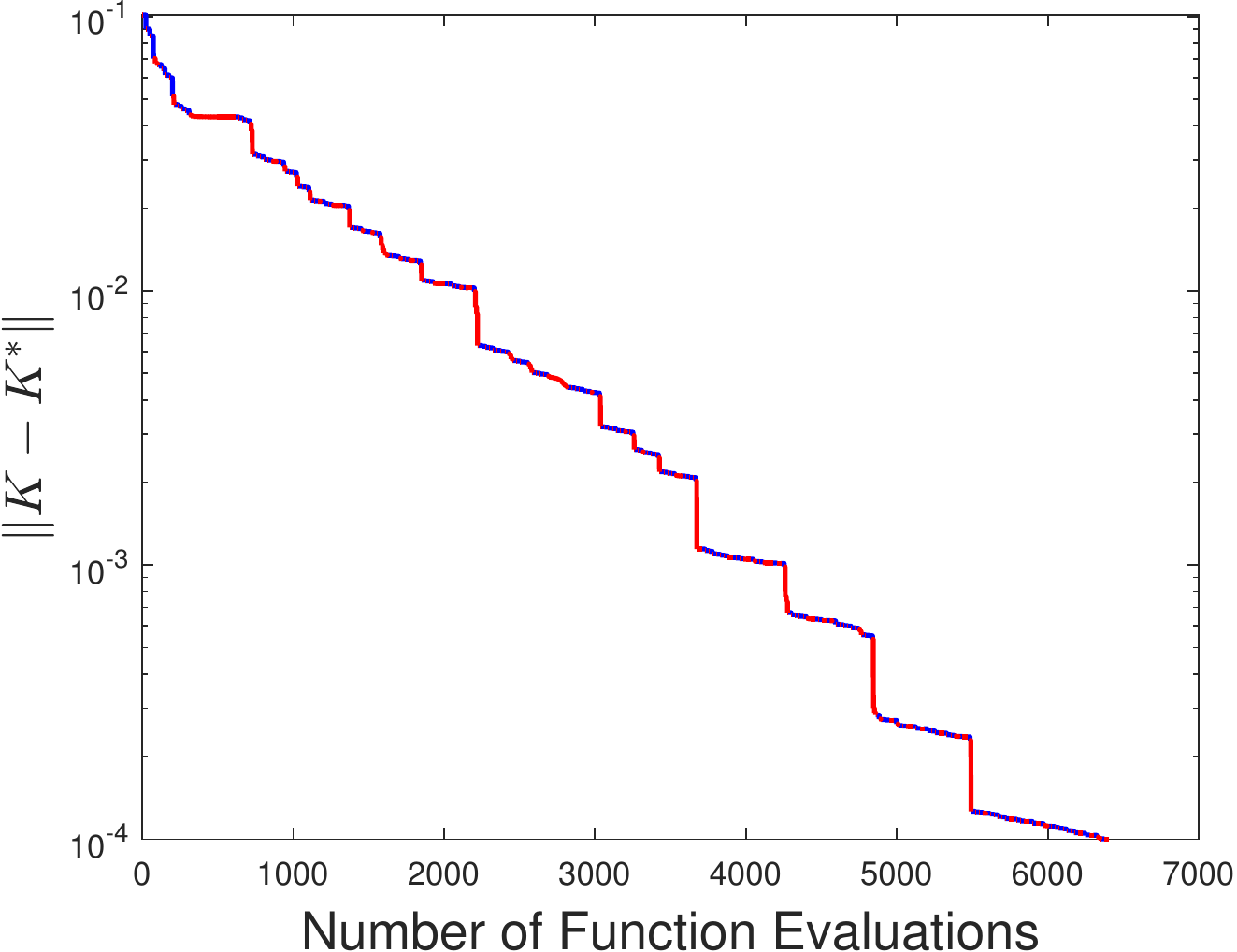}
		\caption{}
		\label{fig:gc_lqr_too_small_eta_min}
	\end{subfigure}	
	\hfill
	\begin{subfigure}[b]{0.3\textwidth}
		\centering
		\includegraphics[width=\textwidth,trim = 0 0 35 35]{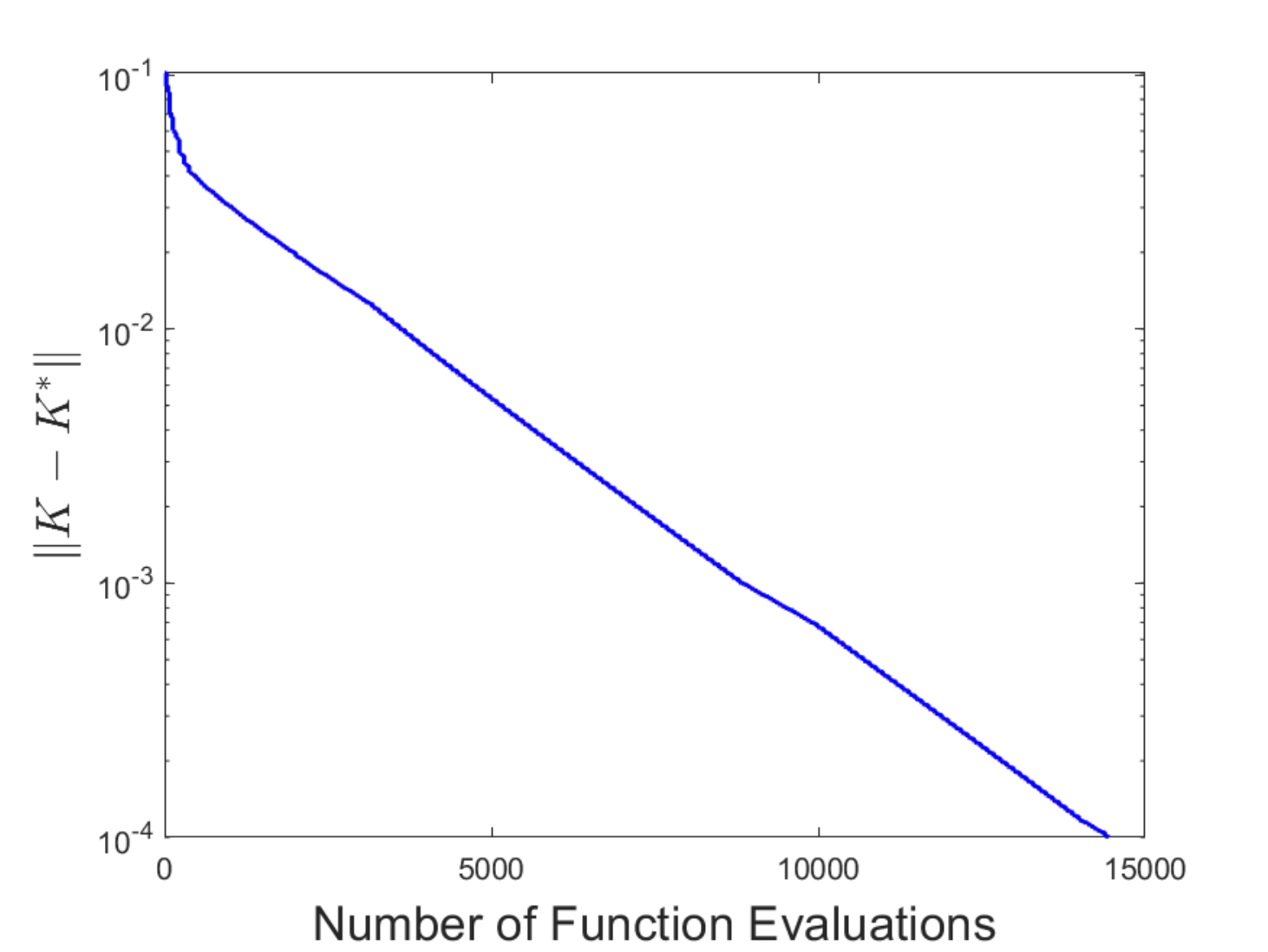}
		\caption{}
		\label{fig:gc_lqr_large_eta_min}
	\end{subfigure}	
	\hfill%
}
	\caption{Effect of the minimum step size $\etamin$ on the convergence of the GC algorithm: (a) $ \etamin = 5 \times 10^{-4} $. (b) $\etamin = 5 \times 10^{-9}$. (c) $ \etamin = 0.5 $. Choosing $\etamin$ either too small or too large increased the total number of function evaluations. (Blue: model-free regime. Red: model-based regime.)}
	\label{fig:lqr_eta_min}
\end{figure}

Since backtracking line search was what prevented the GC algorithm from staying in the model-based regime, we changed the minimum step size $\etamin$ from $ 0.05 $ to $5 \times 10^{-4} $. The result is shown in Fig.~\ref{fig:gc_lqr_small_eta_min}, from which it can be seen that the algorithm stayed
longer in the model-based regime, and the total number of function evaluations was smaller compared with Fig.~\ref{fig:gc_lqr}. However, upon further decreasing $\etamin$ to $ 5 \times 10^{-9}$, the total number of function evaluations increased, as shown in Fig.~\ref{fig:gc_lqr_too_small_eta_min}.  Recall that each attempt within backtracking line search decreases the previous step size by a constant factor $\beta$ and uses one function evaluation to determine whether the new step size is appropriate. For a smaller $\etamin$, in order to declare the failure of line search, backtracking line search requires more attempts and hence more function evaluations before the proposed step size $\eta$ reaches $ \etamin $. To reduce the total number of iterations for reaching a desired level of suboptimality, one should choose $\etamin$ to balance between the number of iterations and the number of function evaluations (per iteration) in the model-based regime. Choosing a smaller $\etamin$ will make the algorithm stay in the model-based regime for more iterations and hence better exploit model information at the expense of more function evaluations per iteration.

We also experimented with a larger value of $\etamin$ by choosing $\etamin = 0.5$, the result of which is shown in Fig.~\ref{fig:gc_lqr_large_eta_min}. As expected, a larger $ \etamin $ caused the algorithm to spend fewer iterations in the model-based regime because the line search termination condition was tightened. In fact, for $ \etamin = 0.5 $, the GC algorithm did not enter the model-based regime at all; the algorithm only stayed in the model-free regime and behaved identically to the model-free method in~\cite{qu_combining_2020} shown in Fig.~\ref{fig:qu_lqr}.

\section{Conclusion}

We have proposed a framework that combines model-based and model-free methods
for optimal control. The framework formulates the optimal control problem as a composite
optimization problem whose objective function is described by a sum of two terms: a
model-based term whose explicit expression can be obtained from an approximate model of the system dynamics and a residual term that captures the unknown modeling error.
To solve the composite optimization problem, we have developed a hybrid gradient-based
algorithm that uses gradient information from the approximate model with
compensation from intermittent model-free updates. Both theoretical
analysis and numerical experiments show that the algorithm uses fewer 
function evaluations than a purely model-free policy gradient algorithm for reaching the same level of suboptimality.

\bibliographystyle{plain} 
\bibliography{GC_L4DC,ref-sh} 

\begin{thebibliography}{10}

\bibitem{abbeel_using_2006}
Pieter Abbeel, Morgan Quigley, and Andrew~Y. Ng.
\newblock Using inaccurate models in reinforcement learning.
\newblock In {\em Proceedings of the 23rd International Conference on
  {{Machine}} Learning - {{ICML}} '06}, pages 1--8, {Pittsburgh, Pennsylvania},
  2006. {ACM Press}.

\bibitem{atkeson_comparison_1997}
C.G. Atkeson and J.C. Santamaria.
\newblock A comparison of direct and model-based reinforcement learning.
\newblock In {\em Proceedings of {{International Conference}} on {{Robotics}}
  and {{Automation}}}, volume~4, pages 3557--3564 vol.4, April 1997.

\bibitem{beck_fast_2009}
Amir Beck and Marc Teboulle.
\newblock A {{Fast Iterative Shrinkage-Thresholding Algorithm}} for {{Linear
  Inverse Problems}}.
\newblock {\em SIAM Journal on Imaging Sciences}, 2(1):183--202, January 2009.

\bibitem{boyd_convex_2004}
Stephen~P. Boyd and Lieven Vandenberghe.
\newblock {\em Convex Optimization}.
\newblock {Cambridge University Press}, {Cambridge, UK ; New York}, 2004.

\bibitem{bu_lqr_2019}
Jingjing Bu, Afshin Mesbahi, Maryam Fazel, and Mehran Mesbahi.
\newblock {{LQR}} through the {{Lens}} of {{First Order Methods}}:
  {{Discrete-time Case}}.
\newblock {\em arXiv:1907.08921 [cs, eess, math]}, July 2019.

\bibitem{chebotar_combining_2017-1}
Yevgen Chebotar, Karol Hausman, Marvin Zhang, Gaurav Sukhatme, Stefan Schaal,
  and Sergey Levine.
\newblock Combining {{Model-Based}} and {{Model-Free Updates}} for
  {{Trajectory-Centric Reinforcement Learning}}.
\newblock In {\em Proceedings of the 34th {{International Conference}} on
  {{Machine Learning}}}, pages 703--711. {PMLR}, July 2017.

\bibitem{chen_reinforcement_2021}
Xin Chen, Guannan Qu, Yujie Tang, Steven Low, and Na~Li.
\newblock Reinforcement {{Learning}} for {{Decision-Making}} and {{Control}} in
  {{Power Systems}}: {{Tutorial}}, {{Review}}, and {{Vision}}.
\newblock {\em arXiv:2102.01168 [cs, eess]}, 2021.

\bibitem{conn_introduction_2009}
Andrew~R. Conn, Katya Scheinberg, and Luis~N. Vicente.
\newblock {\em Introduction to {{Derivative-Free Optimization}}}.
\newblock {{MOS-SIAM Series}} on {{Optimization}}. {Society for Industrial and
  Applied Mathematics}, January 2009.

\bibitem{deisenroth_gaussian_2015}
Marc~Peter Deisenroth, Dieter Fox, and Carl~Edward Rasmussen.
\newblock Gaussian {{Processes}} for {{Data-Efficient Learning}} in
  {{Robotics}} and {{Control}}.
\newblock {\em IEEE Transactions on Pattern Analysis and Machine Intelligence},
  37(2):408--423, February 2015.

\bibitem{fazel_global_2018}
Maryam Fazel, Rong Ge, Sham Kakade, and Mehran Mesbahi.
\newblock Global {{Convergence}} of {{Policy Gradient Methods}} for the
  {{Linear Quadratic Regulator}}.
\newblock In {\em Proceedings of the 35th {{International Conference}} on
  {{Machine Learning}}}, pages 1467--1476. {PMLR}, July 2018.

\bibitem{frasconi_linear_2016}
Hamed Karimi, Julie Nutini, and Mark Schmidt.
\newblock Linear {{Convergence}} of {{Gradient}} and {{Proximal-Gradient
  Methods Under}} the {{Polyak-\L ojasiewicz Condition}}.
\newblock In Paolo Frasconi, Niels Landwehr, Giuseppe Manco, and Jilles
  Vreeken, editors, {\em Machine {{Learning}} and {{Knowledge Discovery}} in
  {{Databases}}}, volume 9851, pages 795--811. {Springer International
  Publishing}, {Cham}, 2016.

\bibitem{kohl_policy_2004}
N.~Kohl and P.~Stone.
\newblock Policy gradient reinforcement learning for fast quadrupedal
  locomotion.
\newblock In {\em {{IEEE International Conference}} on {{Robotics}} and
  {{Automation}}, 2004. {{Proceedings}}. {{ICRA}} '04. 2004}, volume~3, pages
  2619--2624 Vol.3, April 2004.

\bibitem{levine_end--end_2016}
Sergey Levine, Chelsea Finn, Trevor Darrell, and Pieter Abbeel.
\newblock End-to-end training of deep visuomotor policies.
\newblock {\em Journal of Machine Learning Research}, 17(39):1--40, 2016.

\bibitem{lewis_proximal_2016}
A.~S. Lewis and S.~J. Wright.
\newblock A proximal method for composite minimization.
\newblock {\em Mathematical Programming}, 158(1-2):501--546, 2016.

\bibitem{nesterov_gradient_2013}
Yu. Nesterov.
\newblock Gradient methods for minimizing composite functions.
\newblock {\em Mathematical Programming}, 140(1):125--161, 2013.

\bibitem{nocedal_numerical_2006}
Jorge Nocedal and Stephen~J. Wright.
\newblock {\em Numerical Optimization}.
\newblock Springer Series in Operations Research and Financial Engineering.
  {Springer}, {New York}, 2nd ed edition, 2006.

\bibitem{parikh_proximal_2014}
Neal Parikh and Stephen Boyd.
\newblock Proximal {{Algorithms}}.
\newblock {\em Foundations and Trends\textregistered{} in Optimization},
  1(3):127--239, January 2014.

\bibitem{qu_combining_2020}
Guannan Qu, Chenkai Yu, Steven Low, and Adam Wierman.
\newblock Combining {{Model-Based}} and {{Model-Free Methods}} for {{Nonlinear
  Control}}: {{A Provably Convergent Policy Gradient Approach}}.
\newblock {\em arXiv:2006.07476 [cs, eess, math]}, June 2020.

\bibitem{recht_tour_2019}
Benjamin Recht.
\newblock A {{Tour}} of {{Reinforcement Learning}}: {{The View}} from
  {{Continuous Control}}.
\newblock {\em Annual Review of Control, Robotics, and Autonomous Systems},
  2(1):253--279, 2019.

\bibitem{rockafellar_monotone_1976}
R.~Tyrrell Rockafellar.
\newblock Monotone {{Operators}} and the {{Proximal Point Algorithm}}.
\newblock {\em SIAM Journal on Control and Optimization}, 14(5):877--898,
  August 1976.

\bibitem{shashua_sim_2021-1}
Shirli Di-Castro Shashua, Dotan~Di Castro, and Shie Mannor.
\newblock Sim and {{Real}}: {{Better Together}}.
\newblock In {\em Thirty-{{Fifth Conference}} on {{Neural Information
  Processing Systems}}}, May 2021.

\bibitem{tu_gap_2019-1}
Stephen Tu and Benjamin Recht.
\newblock The {{Gap Between Model-Based}} and {{Model-Free Methods}} on the
  {{Linear Quadratic Regulator}}: {{An Asymptotic Viewpoint}}.
\newblock In {\em Proceedings of the {{Thirty-Second Conference}} on {{Learning
  Theory}}}, pages 3036--3083. {PMLR}, June 2019.

\bibitem{wu_flow_2017}
Cathy Wu, Aboudy Kreidieh, Kanaad Parvate, Eugene Vinitsky, and Alexandre~M.
  Bayen.
\newblock Flow: {{Architecture}} and {{Benchmarking}} for {{Reinforcement
  Learning}} in {{Traffic Control}}.
\newblock {\em arXiv:1710.05465 [cs]}, 2017.

\end{thebibliography}

\end{document}